\numberwithin{equation}{section}
\def\pa{\partial}
\let\Re=\undefined\DeclareMathOperator*{\Re}{Re}
\let\Im=\undefined\DeclareMathOperator*{\Im}{Im}
\newcommand{\R}{\mathbb{R}}
\newcommand{\C}{\mathbb{C}}
\newcommand{\eps}{\varepsilon}
\newcommand{\la}{\mathcal{L}_a}
\newtheorem{theorem}{Theorem}[section]
\newtheorem{lemma}[theorem]{Lemma}
\newtheorem{proposition}[theorem]{Proposition}
\theoremstyle{definition}
\newtheorem{definition}[theorem]{Definition}
\newtheorem{remark}[theorem]{Remark}
\newcommand{\qtq}[1]{\quad\text{#1}\quad}
\newcommand{\Extend}[5]{\ext@arrow0099{\arrowfill@#1#2#3}{#4}{#5}}
\begin{document}
\title[Focusing NLS]{Focusing NLS with inverse square potential}

%\author[C. Miao]{Changxing Miao}
%\address{Institute of Applied Physics and Computational Mathematics, Beijing 100088}
%\email{miao\_changxing@iapcm.ac.cn}
%
%
%%
%%
\author[J. Zheng]{Jiqiang Zheng}
\address{Institute of Applied Physics and Computational Mathematics, Beijing 100088, China}
\email{zhengjiqiang@gmail.com}

\begin{abstract}
In this paper, we utilize the method in \cite{BM} to establish the
radial scattering result for the focusing nonlinear Schr\"odinger
equation with inverse square potential $i\pa_tu-\la u=-|u|^{p-1}u$
in the energy space $H^1_a(\R^d)$ in dimensions $d\geq3$, which
extends the result of \cite{KMVZ,LMM} to higher dimensions cases but
with radial initial data. The new ingredient is to establish the
dispersive estimate for radial function and overcome the weak
dispersive estimate when $a<0$.

\end{abstract}

 \maketitle

\begin{center}
 \begin{minipage}{100mm}
   { \small {{\bf Key Words:}  nonlinear Schr\"odinger equation;  scattering; inverse square potential, Morawetz estimate.}
      {}
   }\\
    { \small {\bf AMS Classification:}
      {35P25,  35Q55, 47J35.}
      }
 \end{minipage}
 \end{center}

% \tableofcontents %%代表运行目录，如果不要目录，可以把这句话注释掉。

\section{Introduction}

\noindent We study the initial-value problem for focusing
nonlinear
 Schr\"odinger equations of the form
\begin{align} \label{equ1.1}
\begin{cases}    (i\partial_t-\la)u= -|u|^{p-1}u,\quad
(t,x)\in\R\times\R^d,
\\
u(0,x)=u_0(x)\in H^1(\R^d),
\end{cases}
\end{align}
where $u:\R_t\times\R_x^d\to \C$ and $\la=-\Delta+\frac{a}{|x|^2}$.

The class of solutions to \eqref{equ1.1} is left invariant by the
scaling
\begin{equation}\label{scale}
u(t,x)\mapsto \lambda^{\frac2{p-1}}u(\lambda^2t, \lambda
x),\quad\lambda>0.
\end{equation}
Moreover, one can also check that the only homogeneous $L_x^2$-based
Sobolev space that is left invariant under \eqref{scale} is
$\dot{H}_x^{s_c}(\R^d)$ with $s_c:=\tfrac{d}2-\tfrac2{p-1}$.
Solutions to \eqref{equ1.1} conserve their \emph{mass} and
\emph{energy} by
\begin{align*}
& M(u(t)) := \int_{\R^d} |u(t,x)|^2 \,dx, \\
& E_a(u(t)) := \int_{\R^d} \tfrac12|\nabla u(t,x)|^2 +
\tfrac{a}{2|x|^2} |u(t,x)|^2 - \tfrac1{p+1} |u(t,x)|^{p+1} \,dx.
\end{align*}
Initial data belonging to $H_x^1(\R^d)$ have finite mass and energy.
This follows from equivalent of Sobolev norm  and the following
variant of the Gagliardo-Nirenberg inequality:
\begin{equation}\label{E:GN}
\|f\|_{L_x^{p+1}(\R^d)}^{p+1} \leq C_a
\|f\|_{L_x^2(\R^d)}^\frac{d+2-(d-2)p}2
\|\sqrt{\la}f\|_{L_x^2(\R^d)}^\frac{d(p-1)}2,
\end{equation}
where $C_a$ denotes the sharp constant in the inequality above for
radial functions. We will show in Theorem \ref{T:GN} that the sharp
constant $C_{a}$ is attained by a radial solution $Q_{a}$ to
elliptic equation $-\la Q_a-Q_a+Q_a^p=0$.

 The functions $Q_{a}$ provide examples of non-scattering solutions at the radial threshold via $u(t,x) = e^{it}Q_{a}(x)$.
 We consider the problem of global existence and scattering for \eqref{equ1.1} below threshold.  We begin with the following definitions.

\begin{definition}[Solution, scattering]\label{D:solution} Let $t_0\in\R$ and $u_0\in H_x^1(\R^d)$. Let $I$ be an interval containing $t_0$.
A function $u:I\times\R^d\to\C$ is a \emph{solution} to
\eqref{equ1.1}, if it belongs to $C_t H_a^1\cap L_t^5
H_a^{1,\frac{10d}{5d-4}}(K\times\R^d)$ for any compact $K\subset I$
and obeys the Duhamel formula
\[
u(t) = e^{-i(t-t_0)\la}u_0 + i\int_{t_0}^t e^{-i(t-s)\la}\bigl(|u(s)|^{p-1} u(s)\bigr)\,ds\qtq{for all}t\in I,
\]
where we rely on the self-adjointness of $\la$ to make sense of $e^{-it\la}$ via the Hilbert space functional calculus.
 We call $I$ the \emph{lifespan} of $u$. We call $u$ a \emph{maximal-lifespan solution} if it cannot be extended to any strictly larger interval.
  If $I=\R$, we call $u$ \emph{global}.

Moreover, a global solution $u$ to \eqref{equ1.1}  \emph{scatters}
if there exist $u_\pm\in H_x^1(\R^d)$ such that
\[
\lim_{t\to\pm\infty} \| u(t) - e^{-it\la}u_{\pm} \|_{H_x^1(\R^d)} =
0.
\]
\end{definition}

In this paper, we utilize the method in \cite{BM} to obtain the following threshold result for the class of radial solutions:

\begin{theorem}[Radial scattering/blowup dichotomy]\label{T:radial} Let $(a,d,p)$ satisfy
\begin{equation}\label{equ:acond}
a>\begin{cases}-\big(\frac{d-2}2\big)^2\quad \text{if}\quad d=3\quad
\text{and}\quad \frac43<p-1\leq2\\
-\big(\frac{d-2}2\big)^2+\big(\frac{d-2}2-\frac1{p-1}\big)^2\quad
\text{if}\quad d\geq3\quad \text{and}\quad
\frac2{d-2}\vee\frac4d<p-1<\frac4{d-2},
\end{cases}
\end{equation}
where $a\vee b:=\max\{a,b\}.$ Let $u_0\in H_x^1(\R^d)$ be radial and
satisfy $M(u_0)^{1-s_c}E_a(u_0)^{s_c}<M(Q_a)^{1-s_c}E_a(Q_a)^{s_c}$.
Moreover, if $$\|u_0\|_{L_x^2}^{1-s_c} \|u_0\|_{\dot H_a^1}^{s_c}
<\|Q_a\|_{L_x^2}^{1-s_c} \|Q_a\|_{\dot H_a^1}^{s_c},$$ then the
solution to \eqref{equ1.1} with initial data $u_0$ is global and
scatters.

\end{theorem}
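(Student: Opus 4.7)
The plan is to adapt the concentration-compactness--free scheme of \cite{BM} to the inverse-square setting, combining three ingredients: a variational coercivity (``energy trapping'') argument, a scattering criterion phrased in terms of a single spacetime norm being small on a unit-length window at late times, and a truncated radial virial/Morawetz estimate that verifies this smallness.

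\textbf{Step 1: variational trapping and global existence.} Using the sharp Gagliardo--Nirenberg inequality \eqref{E:GN} together with the fact that $Q_a$ is the radial optimizer (Theorem~\ref{T:GN}), I would show that the subthreshold assumptions on $(M,E_a)$ and $(\|\cdot\|_{L^2},\|\cdot\|_{\dot H^1_a})$ translate into a uniform-in-time bound
\[
\|u(t)\|_{L^2}^{1-s_c}\,\|u(t)\|_{\dot H^1_a}^{s_c}\le (1-\delta)\,\|Q_a\|_{L^2}^{1-s_c}\,\|Q_a\|_{\dot H^1_a}^{s_c},
\]
together with a coercive lower bound on the kinetic part of the energy. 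A standard continuity/bootstrap argument then propagates this bound throughout the lifespan, yielding an a priori $H^1_a$ bound and hence global existence via the local Cauchy theory based on Strichartz estimates for $e^{-it\la}$.

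\textbf{Steps 2--3: scattering criterion and Morawetz.} I would then prove a Dodson--Murphy-type criterion: any global $H^1_a$ solution with a uniform $H^1_a$ bound scatters as soon as there exist $t_n\to\infty$ and an admissible pair $(q,r)$ with $\|u\|_{L^q_tL^r_x([t_n,t_n+1]\times\R^d)}\le\eps$, where $\eps$ depends only on the a priori bounds; this is a perturbative Duhamel/Strichartz argument. To verify this smallness I would run a truncated Morawetz computation with radial weight $w_R$ equal to $\tfrac12|x|^2$ on $|x|\le R$ and essentially $R|x|$ on $|x|\ge R$. Differentiating $\int w_R |u|^2\,dx$ twice in $t$ and invoking the Step~1 coercivity plus the Strauss-type radial decay yields, on any interval $I$,
\[
\int_I\int_{\R^d}\chi_R\,|u|^{p+1}\,dx\,dt \gtrsim \delta \int_I \|u(t)\|_{\dot H^1_a}^2\,dt - \mathrm{Err}(R,I),
\]
while Cauchy--Schwarz bounds the left-hand side by $O(R)$. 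Choosing $|I|\gg R\gg 1$ forces the spacetime $L^{p+1}$-density to be small on a unit-length subwindow, triggering the criterion.

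\textbf{Main obstacle.} The delicate step is the Morawetz analysis when $a<0$: the dispersive estimate for $e^{-it\la}$ on radial functions is weaker than its free counterpart, the radial Strauss embedding loses regularity near the origin, and the Morawetz identity generates a boundary contribution from the singular term $a/|x|^2$ which is favorably signed for $a>0$ but must be absorbed for $a<0$. This is precisely the difficulty flagged in the abstract, and handling it requires the refined radial dispersive/Strichartz estimates of the paper together with a careful choice of the cut-off adapted to the singularity at $x=0$; simultaneously the lower endpoint in \eqref{equ:acond} must be respected so that $\la$ retains enough positivity to run the variational and virial arguments.
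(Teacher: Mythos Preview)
Your Step~1 matches the paper's variational trapping (Proposition~\ref{P:coercive} and Remark~\ref{R:coercive}), and the overall three-part architecture is correct. However, you have swapped the roles of Steps~2 and~3 and, as a consequence, misplaced the main difficulty.

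The paper's scattering criterion (Lemma~\ref{lem:scatcri}) is \emph{not} phrased as smallness of a Strichartz norm on a unit-length window; it is Tao's mass-evacuation criterion
\[
\liminf_{t\to\infty}\int_{|x|<R}|u(t,x)|^2\,dx\le\epsilon^2,
\]
and its proof is precisely where the new radial dispersive estimate (Theorem~\ref{thm:disp}) is used. One writes $e^{i(t-T)\la}u(T)=e^{it\la}u_0+F_1+F_2$ via Duhamel split at $T-\epsilon^{-\theta}$; the far-past piece $F_1$ is controlled pointwise by the dispersive decay, and when $a<0$ this decay carries an extra weight $(1+|x|^{-\sigma})$ and a loss $|t|^{\sigma}$, which forces the careful interplay between $R_1$, $\epsilon$, $\theta$ and the hypothesis~\eqref{equ:acond} (needed so that $\sigma<1$ and the weighted $L^1$ norms of $|u|^{p-1}u$ close via Hardy). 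Your proposal never invokes dispersive decay in the criterion; a purely perturbative Strichartz argument on a unit window will not produce smallness of the forward linear evolution without this ingredient.

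Conversely, the Morawetz step (Proposition~\ref{prop:mores}) is essentially standard and does \emph{not} require any dispersive estimate. The $a/|x|^2$ contribution in the exterior region $|x|>R$ is simply $O(|a|R^{-2}M(u))$ and is absorbed into the $R^{-2}$ error; there is no delicate sign issue to ``absorb'' for $a<0$. Also, the coercivity you want there is the potential-energy version
\[
\|\chi_R u\|_{\dot H^1_a}^2-\tfrac{d(p-1)}{2(p+1)}\|\chi_R u\|_{L^{p+1}}^{p+1}\ge c\,\|\chi_R u\|_{L^{p+1}}^{p+1},
\]
not the kinetic-energy inequality you wrote; your displayed inequality has the wrong orientation and would not feed the criterion you stated. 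In short: move the dispersive estimate and the $a<0$ analysis from your ``Main obstacle'' paragraph into the proof of the scattering criterion, and replace your unit-window criterion by the mass-in-a-ball one.
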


\begin{remark}
$(i)$ In the case $a=0$, such result was firstly considered by
Holmer and Roudenko\cite{HR} for the  3D cubic radial  and
Duyckaerts-Holmer-Roudenko\cite{DHR} for nonradial data. Lately,
Killip, Murphy, Visan and the third author \cite{KMVZ} and Lu, Miao
and Murphy \cite{LMM} generalized their result to the focusing
Schr\"odinger equation with inverse square potential, i.e.
\eqref{equ1.1}. In this paper, we extend the result of
\cite{KMVZ,LMM} to general nonlinear term in dimensions $d\geq3$ but
with radial initial data. We also refer the reader to the defocusing
nonlinear Schr\"odinger equation with inverse square potential
\cite{KMVZZ2,ZZ}. The main new ingredient of this paper is to
establish the dispersive estimate for radial function and overcome
the weak dispersive estimate when $a<0$.

$(ii)$ The restriction on $(a,d,p)$ stems from the local
well-posedness theory in $H^1(\R^d)$ for \eqref{equ1.1}. While in
the proof of local well-posedness, we need to estimate powers of
$\mathcal{L}_a$ applied to the nonlinearity term.   To obtain the
requisite fractional calculus estimates for $\mathcal{L}_a$, we rely
on the equivalence of Sobolev spaces  to exchange powers of
$\mathcal{L}_a$ and powers of $-\Delta$ (for which fractional
calculus estimates are known). This argument leads to a restriction
on the range of $(a,d,p)$ as in \eqref{equ:acond}.

%$(iii)$ How about the mass-critical? The keypoint is to show Lemma
%\ref{lem:scatcri} for $p=1+\frac4d$. (add)

\end{remark}

We sketch the idea and argument for the proof here. First, by variational analysis and blowup criterion, we derive that the solution $u$ is global.
  And then, by radial Sobolev embedding and dispersive estimate, we establish a scattering criterion as the case $a=0$ \cite{Tao}. Here we should be careful in the case $a<0$, since we have only the weak dispersive estimate, see Lemma \ref{thm:disp}.  Finally, using Virial argument, radial Sobolev embedding and  variational analysis, we prove the above  scattering criterion.

We conclude the introduction by giving some notations which
will be used throughout this paper. To simplify the expression of
our inequalities, we introduce some symbols $\lesssim, \thicksim,
\ll$. If $X, Y$ are nonnegative quantities, we use $X\lesssim Y $ or
$X=O(Y)$ to denote the estimate $X\leq CY$ for some $C$, and $X
\thicksim Y$ to denote the estimate $X\lesssim Y\lesssim X$. We use
$X\ll Y$ to mean $X \leq c Y$ for some small constant $c$. We use
$C\gg1$ to denote various large finite constants, and $0< c \ll 1$
to denote various small constants. For any $r, 1\leq r \leq \infty$,
we denote by $\|\cdot \|_{r}$ the norm in
$L^{r}=L^{r}(\mathbb{R}^3)$ and by $r'$ the conjugate exponent
defined by $\frac{1}{r} + \frac{1}{r'}=1$.

%%%%%%%%%%%%%%%%%%%%%%%%%%%%%%%%%%%%%%%%%%%%%%%%%%%%%%%%%%%%%%%%%%%%%%%%%%%%%%%%%%%%%%%%%%%%%%%%%%%%%%%%%%%%%%%%%%%%%%%%%%%%%%%%%%%%%%%%%%%%%%

%%%%%%%%%%%%%%%%%%%%%%%%%%%%%%%%%%%                                %%%%%%%%%%%%%%%%%%%%%%%%%%%%%%%%%%%%%%%%%%%%%%%%%%%%%%%%%%%%%%%%%%%%%%

%%%%%%%%%%%%%%%%%%%%%%%%%%%%%%%%%%%%%%%%%%%%%%%%%%%%%%%%%%%%%%%%%%%%%%%%%%%%%%%%%%%%%%%%%%%%%%%%%%%%%%%%%%%%%%%%%%%%%%%%%%%%%%%%%%%%%%%%%%%%%%

\section{Preliminaries}

\subsection{Harmonic analysis for $\la$} In this section, we collect some harmonic analysis tools adapted to the operator $\la$.
 The primary reference for this section is \cite{KMVZZ1}.

For $1< r < \infty$, we write $\dot H^{1,r}_a(\R^d)$ and $
H^{1,r}_a(\R^d)$ for the homogeneous and inhomogeneous Sobolev
spaces associated with $\la$, respectively, which have norms
$$
\|f\|_{\dot H^{1,r}_a(\R^d)}= \|\sqrt{\la} f\|_{L^r(\R^d)} \qtq{and}
\|f\|_{H^{1,r}_a(\R^d)}= \|\sqrt{1+ \la} f\|_{L^r(\R^d)}.
$$
When $r=2$, we simply write $\dot H^{1}_a(\R^d)=\dot
H^{1,2}_a(\R^d)$ and $H^{1}_a(\R^d)=H^{1,2}_a(\R^d)$.

 By the sharp Hardy inequality, the operator $\la$ is positive precisely for $a\geq -(\frac{d-2}2)^2$.
Denote
\begin{equation}\label{equ:sigma}
\sigma:=\tfrac{d-2}2-\bigr[\bigl(\tfrac{d-2}2\bigr)^2+a\bigr]^{\frac12}.
\end{equation}

Estimates on the heat kernel associated to the operator $\mathcal{L}_a$ were found by Liskevich--Sobol \cite{LS} and Milman--Semenov \cite{MS}.

\begin{lemma}[Heat kernel bounds, \cite{LS, MS}] \label{L:kernel}  Let $d\geq 3$ and $a\geq -(\tfrac{d-2}{2})^2$.
 There exist positive constants $C_1,C_2$ and $c_1,c_2$ such that for any $t>0$ and any $x,y\in\R^d\backslash\{0\}$,
\[
C_1(1\vee\tfrac{\sqrt{t}}{|x|})^\sigma(1\vee\tfrac{\sqrt{t}}{|y|})^\sigma t^{-\frac{d}{2}} e^{-\frac{|x-y|^2}{c_1t}} \leq e^{-t\la}(x,y) \leq
C_2(1\vee\tfrac{\sqrt{t}}{|x|})^\sigma(1\vee\tfrac{\sqrt{t}}{|y|})^\sigma t^{-\frac{d}{2}} e^{-\frac{|x-y|^2}{c_2t}}.
\]
\end{lemma}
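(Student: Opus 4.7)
The plan is to reduce to $t=1$ via the scaling identity $e^{-t\la}(x,y) = t^{-d/2} e^{-\la}(x/\sqrt{t}, y/\sqrt{t})$, which follows from the fact that $\la$ is homogeneous of degree $-2$ under dilations. Under this reduction the target inequality becomes
\[
e^{-\la}(x,y) \asymp (1\vee|x|^{-1})^\sigma (1\vee|y|^{-1})^\sigma e^{-c|x-y|^2},
\]
and the general statement is recovered by undoing the scaling.

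Next I would expand the kernel in spherical harmonics. On the $\ell$-th angular sector the radial part of $\la$ is
\[
L_\ell = -\partial_r^2 - \tfrac{d-1}{r}\partial_r + \tfrac{\ell(\ell+d-2)+a}{r^2},
\]
and the substitution $f(r) = r^{-(d-1)/2} g(r)$ conjugates $L_\ell$ to the Bessel operator $-\partial_r^2 + (\nu_\ell^2 - \tfrac{1}{4})/r^2$ with $\nu_\ell^2 := (\ell + \tfrac{d-2}{2})^2 + a$. The heat kernel of the latter is classical, giving
\[
K_\ell(r,s,t) = \frac{(rs)^{-(d-2)/2}}{2t}\, I_{\nu_\ell}\!\Bigl(\tfrac{rs}{2t}\Bigr)\, e^{-(r^2+s^2)/(4t)},
\]
where $I_\nu$ is the modified Bessel function. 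The hypothesis $a \geq -(\tfrac{d-2}{2})^2$ ensures $\nu_\ell \geq 0$, and in particular $\nu_0 = \tfrac{d-2}{2} - \sigma$.

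Using the asymptotics $I_\nu(z) \sim (z/2)^\nu/\Gamma(\nu+1)$ as $z \to 0^+$ and $I_\nu(z) \sim e^z/\sqrt{2\pi z}$ as $z \to \infty$, I would split the analysis into two regimes. When $rs \gtrsim t$ (so both points lie at distance $\gtrsim \sqrt{t}$ from the origin), the weights $(1\vee\sqrt{t}/r)^\sigma(1\vee\sqrt{t}/s)^\sigma$ are $\mathcal{O}(1)$, and the large-argument expansion combined with the Gegenbauer addition formula resums $\sum_\ell K_\ell(r,s,t)\, P_\ell(x/|x|,y/|y|)$ into a quantity comparable to the free Gaussian $t^{-d/2} e^{-|x-y|^2/(4t)}$, matching the stated bounds. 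When $rs \ll t$, the small-argument asymptotic shows that the $\ell=0$ mode dominates and produces exactly
\[
(rs)^{-\sigma} t^{-(d/2-\sigma)} e^{-(r^2+s^2)/(4t)} = t^{-d/2}\,(\sqrt{t}/r)^\sigma(\sqrt{t}/s)^\sigma\, e^{-(r^2+s^2)/(4t)},
\]
while the modes $\ell \geq 1$ are suppressed by additional positive powers of $rs/t$ and are harmless.

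The main obstacle is upgrading the factor $e^{-(r^2+s^2)/(4t)}$ coming directly from the Bessel representation to the sharper Gaussian $e^{-|x-y|^2/(ct)}$: the missing cross term $-2|x||y|\cos\theta$ must be recovered through the angular summation, or else through Davies' perturbation method, where conjugating $e^{-t\la}$ by a bounded exponential weight $e^\phi$ with $\|\nabla\phi\|_\infty$ small and applying weighted on-diagonal bounds yields the Gaussian constant $c_2$ arbitrarily close to $4$. The matching lower bound follows from term-by-term positivity of the Bessel series (the $\ell = 0$ contribution alone already realizes a lower bound of the claimed form) together with the fact that $h(x) = |x|^{-\sigma}$ satisfies $\la h = 0$, so that Doob's $h$-transform reduces the lower bound to the classical heat-kernel lower bound for the resulting symmetric Markov semigroup.
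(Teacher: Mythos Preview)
The paper does not prove this lemma: it is quoted verbatim from Liskevich--Sobol and Milman--Semenov and used as a black box (its only role downstream is to feed into the equivalence of Sobolev norms, Lemma~\ref{pro:equivsobolev}). So there is no ``paper's own proof'' to compare against.

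That said, your sketch is broadly aligned with how the cited references actually proceed. Milman--Semenov work via desingularizing weights, which is exactly your Doob $h$-transform with $h(x)=|x|^{-\sigma}$: conjugating $e^{-t\la}$ by $h$ produces a semigroup whose kernel obeys ordinary Gaussian two-sided bounds, and unwinding the conjugation yields the stated weights. Your Davies perturbation argument is also the standard route to the sharp off-diagonal Gaussian in the upper bound. The explicit Bessel/spherical-harmonic expansion you write down is more than either reference uses and is not needed for the result, though it is a legitimate alternative for the upper bound in each regime.

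One inaccuracy worth flagging: the phrase ``term-by-term positivity of the Bessel series'' is not quite right for the lower bound. The radial kernels $K_\ell$ are positive, but the zonal harmonics $P_\ell(\hat x,\hat y)$ oscillate for $\ell\geq 1$, so the $\ell=0$ term alone does not bound the full sum from below pointwise without further work. The Doob-transform route you mention immediately after is the correct and self-contained argument; I would drop the Bessel-series justification for the lower bound and rely solely on that.
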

As a consequence, we can obtain the following equivalence of Sobolev
spaces.
\begin{lemma}[Equivalence of Sobolev spaces, \cite{KMVZZ1}]\label{pro:equivsobolev} Let $d\geq 3$, $a\geq -(\frac{d-2}{2})^2$, and $0<s<2$. If $1<p<\infty$ satisfies $\frac{s+\sigma}{d}<\frac{1}{p}< \min\{1,\frac{d-\sigma}{d}\}$, then
\[
\||\nabla|^s f \|_{L_x^p}\lesssim_{d,p,s} \|(\la)^{\frac{s}{2}} f\|_{L_x^p}\qtq{for all}f\in C_c^\infty(\R^d\backslash\{0\}).
\]
If $\max\{\frac{s}{d},\frac{\sigma}{d}\}<\frac{1}{p}<\min\{1,\frac{d-\sigma}{d}\}$, then
\[
\|(\la)^{\frac{s}{2}} f\|_{L_x^p}\lesssim_{d,p,s} \||\nabla|^s f\|_{L_x^p} \qtq{for all} f\in C_c^\infty(\R^d\backslash\{0\}).
\]
\end{lemma}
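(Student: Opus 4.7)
The two inequalities are equivalent, via the substitutions $g = \la^{s/2} f$ and $g = |\nabla|^s f$ respectively, to the $L^p$-boundedness of the composite operators
\[
T_1 := |\nabla|^s \, \la^{-s/2}, \qquad T_2 := \la^{s/2}\, (-\Delta)^{-s/2}.
\]
My plan is to analyze $T_1$ and $T_2$ by computing their integral kernels via the Balakrishnan subordination formula
\[
A^{-\alpha} = \frac{1}{\Gamma(\alpha)} \int_0^\infty t^{\alpha-1}\, e^{-tA}\, dt, \qquad \alpha>0,
\]
applied with $A = \la$ and $A = -\Delta$, and then to invoke the heat kernel bounds of Lemma \ref{L:kernel}.

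The first step is a pointwise comparison between the Riesz-type kernel $\la^{-s/2}(x,y)$ and its free counterpart $(-\Delta)^{-s/2}(x,y) \sim |x-y|^{s-d}$. Substituting the two-sided estimates of Lemma \ref{L:kernel} into the subordination integral and carrying out the $t$-integration should yield
\[
\la^{-s/2}(x,y) \sim |x-y|^{s-d} \, \Bigl(1 \vee \tfrac{|x-y|}{|x|}\Bigr)^{\!\sigma} \Bigl(1 \vee \tfrac{|x-y|}{|y|}\Bigr)^{\!\sigma},
\]
so that the deviation from the classical Riesz potential is captured by two explicit weight factors, singular at the origin when $\sigma>0$ and vanishing there when $\sigma<0$. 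The analogous identity with $\la$ and $-\Delta$ interchanged follows the same way using $1 \le 1 \vee \tfrac{\sqrt{t}}{|x|}$ for the trivial lower bound.

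For the first inequality I would combine this kernel bound with the identity $|\nabla|^s (-\Delta)^{-s/2} = \mathrm{Id}$ to dominate $|T_1 g(x)|$ pointwise by a classical Riesz potential of $|g|$ weighted by powers of $|x|$ and $|y|$; the resulting $L^p$ estimate then reduces to the Stein--Weiss weighted inequality for the Riesz potential together with the generalized Hardy inequality $\||x|^{-s} f\|_{L^p} \lesssim \||\nabla|^s f\|_{L^p}$. Keeping careful track of the admissible exponents in these weighted estimates — equivalently, verifying that the weights $|x|^{\pm\sigma}$ lie in the Muckenhoupt class $A_p$ — delivers exactly the range $\frac{s+\sigma}{d} < \frac{1}{p} < \min\{1, \frac{d-\sigma}{d}\}$. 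The reverse inequality for $T_2$ goes by the same scheme in the opposite direction; the absence of the Hardy-shift in one of the two weights loosens the lower threshold on $1/p$ to $\max\{\tfrac{s}{d}, \tfrac{\sigma}{d}\}$.

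The technical heart of the argument, and the step I expect to be the most delicate, is the kernel calculation: one must split the $t$-integral according to the regimes $t \lessgtr |x|^2, |y|^2$, handle both signs of $\sigma$ uniformly (in particular the case $\sigma<0$ where the weight factor must be bounded below), and obtain the matching lower bound needed for the reverse inequality. Once the two-sided kernel bound is available the passage to $L^p$ estimates is essentially mechanical, and the endpoint constraints on $1/p$ emerge directly from the $A_p$-conditions on the explicit power weights $|x|^{\pm\sigma}$.
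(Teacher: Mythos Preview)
The present paper does not prove this lemma; it is quoted from \cite{KMVZZ1} and used as a black box, so there is no in-paper argument to compare your attempt against.

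Your sketch is in the right spirit and shares its backbone with the proof in \cite{KMVZZ1}: the subordination formula together with Lemma~\ref{L:kernel} does yield the two-sided pointwise bound
\[
\la^{-s/2}(x,y)\ \sim\ |x-y|^{s-d}\Bigl(1\vee\tfrac{|x-y|}{|x|}\Bigr)^{\sigma}\Bigl(1\vee\tfrac{|x-y|}{|y|}\Bigr)^{\sigma},
\]
and the exponent windows on $1/p$ are exactly the Stein--Weiss / generalized Hardy thresholds for the power weights $|x|^{\pm\sigma}$.

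One step in your outline does need repair, however. You propose to ``dominate $|T_1g(x)|$ pointwise'' after invoking $|\nabla|^s(-\Delta)^{-s/2}=\mathrm{Id}$. But $|\nabla|^s$ is not a positive integral operator for $0<s<2$, so there is no pointwise domination of $|\nabla|^s h$ in terms of $|h|$; you cannot simply push $|\nabla|^s$ through the kernel bound. In \cite{KMVZZ1} this is handled not by a pointwise argument but by a square-function comparison: the heat-kernel bounds give kernel estimates for the operators intertwining the Littlewood--Paley pieces built from $e^{-t\la}$ and $e^{t\Delta}$, these are controlled by Schur's test with weights $(1\vee\tfrac{\sqrt{t}}{|x|})^{\pm\sigma}$, and the Sobolev equivalence then follows from the square-function characterization of $\dot H^{s,p}$ together with the Hardy/Stein--Weiss inequalities you mention. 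Your weighted-inequality step survives, but it enters at the square-function level rather than on the Riesz potential directly. If you prefer to stay with Riesz potentials, the workable variant is to split $T_1=\mathrm{Id}+(-\Delta)^{s/2}\bigl[\la^{-s/2}-(-\Delta)^{-s/2}\bigr]$ and exploit that the difference kernel is concentrated where $\min(|x|,|y|)\lesssim|x-y|$; either way the naive pointwise step must be replaced by an honest operator-theoretic argument.
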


We will make use of the following fractional calculus estimates due to Christ and Weinstein \cite{CW}.  Combining these estimates with Lemma~\ref{pro:equivsobolev}, we can deduce analogous statements for the operator $\la$ (for restricted sets of exponents).

\begin{lemma}[Fractional calculus]\text{ }
\begin{itemize}
\item[(i)] Let $s\geq 0$ and $1<r,r_j,q_j<\infty$ satisfy $\tfrac{1}{r}=\tfrac{1}{r_j}+\tfrac{1}{q_j}$ for $j=1,2$. Then
\[
\| |\nabla|^s(fg) \|_{L_x^r} \lesssim \|f\|_{L_x^{r_1}} \||\nabla|^s g\|_{L_x^{q_1}} + \| |\nabla|^s f\|_{L_x^{r_2}} \| g\|_{L_x^{q_2}}.
\]
\item[(ii)] Let $G\in C^1(\C)$ and $s\in (0,1]$, and let $1<r_1\leq \infty$  and $1<r,r_2<\infty$ satisfy $\tfrac{1}{r}=\tfrac{1}{r_1}+\tfrac{1}{r_2}$. Then
\[
\| |\nabla|^s G(u)\|_{L_x^r} \lesssim \|G'(u)\|_{L_x^{r_1}} \|u\|_{L_x^{r_2}}.
\]
\end{itemize}
\end{lemma}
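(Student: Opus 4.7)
The plan is to follow the classical Littlewood--Paley approach of Christ--Weinstein. For both parts I would rely on the square-function characterization $\||\nabla|^s F\|_{L^r}\sim\big\|(\sum_j 4^{js}|\Delta_j F|^2)^{1/2}\big\|_{L^r}$ valid for $1<r<\infty$, where $\Delta_j$ projects to frequencies $|\xi|\sim 2^j$ and $S_j=\sum_{k\le j-2}\Delta_k$ is the low-frequency cutoff.

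For (i), the first step is Bony's paraproduct decomposition
\[
fg \;=\; \sum_j (S_j f)(\Delta_j g)\;+\;\sum_j (\Delta_j f)(S_j g)\;+\;\sum_{|j-k|\le 2}(\Delta_j f)(\Delta_k g).
\]
The first two (paraproduct) pieces are frequency-localized at scale $2^j$, so $|\nabla|^s$ acts effectively as multiplication by $2^{js}$ on the high-frequency factor, while the low-frequency factor is controlled pointwise by the Hardy--Littlewood maximal function $Mf$ (resp.\ $Mg$). H\"older with exponents $(r_j,q_j)$ followed by the Fefferman--Stein vector-valued maximal inequality then delivers the two terms on the right. For the diagonal high-high remainder I would use Bernstein's inequality to transfer $2^{js}$ onto either factor and sum over the near-diagonal band $|j-k|\le 2$, landing again in one of the two stated bounds.

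For (ii), I would invoke the Sobolev--Slobodetski characterization valid for $s\in(0,1]$,
\[
\||\nabla|^s F\|_{L^r}\;\sim\;\Big\|\Big(\int_{\R^d}\frac{|F(\cdot+h)-F(\cdot)|^2}{|h|^{d+2s}}\,dh\Big)^{1/2}\Big\|_{L^r},
\]
applied to $F=G(u)$, combined with the pointwise mean-value bound
\[
|G(u(x+h))-G(u(x))|\;\le\;|u(x+h)-u(x)|\sup_{\tau\in[0,1]}|G'((1-\tau)u(x)+\tau u(x+h))|.
\]
Estimating the supremum by $M(G'(u))(x)+M(G'(u))(x+h)$, applying H\"older with $\tfrac1r=\tfrac1{r_1}+\tfrac1{r_2}$, and using the Hardy--Littlewood maximal bound for $G'(u)$ together with the same Slobodetski characterization for $u$ on the second factor produces the natural chain-rule inequality, with $\||\nabla|^s u\|_{L^{r_2}}$ appearing on the right in place of $\|u\|_{L^{r_2}}$; I suspect the statement as typeset contains a harmless typo and that the intended right-hand side is $\|G'(u)\|_{L^{r_1}}\||\nabla|^s u\|_{L^{r_2}}$.

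The step I expect to be the main obstacle is the endpoint $r_1=\infty$ in (ii): there the Fefferman--Stein inequality for $M(G'(u))$ degenerates and one is forced to use the pointwise bound $\|G'(u)\|_{L^\infty}$ directly, outside the maximal-function machinery. Handling this case cleanly, while retaining the full range of admissible $(r,r_1,r_2)$, is the most delicate feature of the original Christ--Weinstein argument.
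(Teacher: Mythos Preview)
The paper does not prove this lemma at all: it is stated with the attribution ``due to Christ and Weinstein \cite{CW}'' and used as a black box. Your sketch is precisely an outline of the Christ--Weinstein argument (paraproduct decomposition plus Fefferman--Stein for the Leibniz rule, and a difference-quotient/mean-value argument for the chain rule), so in that sense you are aligned with the paper's intended source rather than diverging from it.

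Your observation about part~(ii) is correct: the right-hand side should read $\|G'(u)\|_{L^{r_1}}\||\nabla|^s u\|_{L^{r_2}}$, not $\|G'(u)\|_{L^{r_1}}\|u\|_{L^{r_2}}$. As written the inequality is false by scaling, and the form you identify is exactly Proposition~3.1 of Christ--Weinstein. Your remark on the $r_1=\infty$ endpoint is also on point: in that case one simply pulls $\|G'(u)\|_{L^\infty}$ out pointwise before any maximal-function step, which is in fact easier rather than harder. There is no gap in your plan beyond the level of detail expected of a citation sketch.
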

We will need the following radial Sobolev embedding from \cite{Tao}.

\begin{lemma}[Radial Sobolev embedding]\label{lem:sobemb}
Let $d\geq3.$ For radial $f\in H^1(\R^d)$, there holds
\begin{equation}\label{equ:radsobemb}
\big\||x|^sf\big\|_{L_x^\infty(\R^d)}\lesssim\|f\|_{H^1(\R^d)},
\end{equation}
for $\tfrac{d}2-1\leq s\leq\tfrac{d-1}2.$
\end{lemma}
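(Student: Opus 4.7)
The plan is to handle the two endpoints $s = \tfrac{d-1}{2}$ and $s = \tfrac{d-2}{2}$ separately via the one-dimensional fundamental theorem of calculus applied to the radial profile $r \mapsto f(r)$, and then to recover the intermediate range by the trivial comparison: for $r \geq 1$ one has $r^s \leq r^{(d-1)/2}$, while for $r \leq 1$ one has $r^s \leq r^{(d-2)/2}$. By density it suffices to treat smooth, compactly supported radial functions.

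For the upper endpoint $s = (d-1)/2$, I would write
\[
r^{d-1}|f(r)|^2 = -\int_r^\infty \pa_\rho\bigl(\rho^{d-1}|f(\rho)|^2\bigr)\,d\rho = -(d-1)\int_r^\infty \rho^{d-2}|f|^2\,d\rho - 2\int_r^\infty \rho^{d-1}\Re(\bar f\,\pa_\rho f)\,d\rho,
\]
discard the first term on the right (which is nonpositive for $d \geq 2$), and apply Cauchy--Schwarz to the second, using the radial identities $\|f\|_{L^2(\R^d)}^2 \sim \int_0^\infty \rho^{d-1}|f|^2\,d\rho$ and $\|\nabla f\|_{L^2(\R^d)}^2 \sim \int_0^\infty \rho^{d-1}|\pa_\rho f|^2\,d\rho$. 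This gives $r^{d-1}|f(r)|^2 \lesssim \|f\|_{L^2}\|\nabla f\|_{L^2} \lesssim \|f\|_{H^1}^2$.

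For the lower endpoint $s = (d-2)/2$, the same manipulation with weight $\rho^{d-2}$ produces
\[
r^{d-2}|f(r)|^2 \leq 2 \int_r^\infty \rho^{d-2}|f|\,|\pa_\rho f|\,d\rho \lesssim \Bigl(\int_0^\infty \rho^{d-3}|f|^2\,d\rho\Bigr)^{1/2}\|\nabla f\|_{L^2},
\]
and I would control the remaining weighted $L^2$-integral by the classical Hardy inequality $\int_{\R^d}|x|^{-2}|f|^2\,dx \lesssim \|\nabla f\|_{L^2}^2$, which holds for $d \geq 3$ and yields $r^{d-2}|f(r)|^2 \lesssim \|f\|_{H^1}^2$.

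Combining the two pointwise bounds via the split on $|x|$ described in the opening paragraph produces the claimed estimate. There is no serious obstacle here: the only points worth tracking are that at the upper endpoint the discarded term has the favorable sign, and at the lower endpoint one must invoke the Hardy inequality for the pure Laplacian (which is why the conclusion is independent of $a$, consistent with the statement).
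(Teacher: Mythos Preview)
Your argument is correct. The paper does not actually prove this lemma; it simply quotes the result from \cite{Tao} without argument, so there is nothing to compare against at the level of method. Your treatment of the endpoint $s=\tfrac{d-1}{2}$ is the classical Strauss estimate, and your treatment of $s=\tfrac{d-2}{2}$ via the weight $\rho^{d-2}$ together with the Hardy inequality $\int |x|^{-2}|f|^2\,dx\lesssim\|\nabla f\|_{L^2}^2$ (valid precisely for $d\geq 3$) is clean and correct. The trivial pointwise comparison $r^s\leq r^{(d-1)/2}$ for $r\geq 1$ and $r^s\leq r^{(d-2)/2}$ for $r\leq 1$ then covers the full range, so no genuine interpolation is needed.

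One cosmetic remark: your closing parenthetical about the conclusion being ``independent of $a$'' is slightly beside the point, since the statement is phrased for the standard $H^1(\R^d)$ norm and never mentions $\mathcal{L}_a$; but this does not affect the mathematics.
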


 Let $f$ be Schwartz function defined on $\R^d$, we define
the Hankel transform of order $\nu$:
\begin{equation}\label{2.14}
(\mathcal{H}_{\nu}f)(\xi)=\int_0^\infty(r\rho)^{-\frac{d-2}2}J_{\nu}(r\rho)f(r\omega)r^{d-1}\mathrm{d}r,
\end{equation}
where $\rho=|\xi|$, $\omega=\xi/|\xi|$ and $J_{\nu}$ is the Bessel
function of order $\nu$  defined by the
integral
\begin{equation*}
J_\nu(r)=\frac{(r/2)^\nu}{\Gamma(\nu+\frac12)\Gamma(1/2)}\int_{-1}^{1}e^{isr}(1-s^2)^{(2\nu-1)/2}\mathrm{d
}s\quad\text{with}~ \nu>-\frac12~\text{and}~ r>0.
\end{equation*} Specially, if the function $f$ is radial,
then
\begin{equation}\label{2.15}
(\mathcal{H}_{\nu}f)(\rho)=\int_0^\infty(r\rho)^{-\frac{d-2}2}J_{\nu}(r\rho)f(r)r^{d-1}\mathrm{d}r.
\end{equation}

The following properties of the Hankel transform are obtained in
\cite{BPSTZ}:
\begin{lemma}\label{Hankel}
Let $\mathcal{H}_{\nu}$ be defined above and $
A_{\nu}:=-\partial_r^2-\frac{d-1}r\partial_r+\big[\nu^2-\big(\frac{d-2}2\big)^2\big]{r^{-2}}.
$ Then

$(\rm{i})$ $\mathcal{H}_{\nu}=\mathcal{H}_{\nu}^{-1}$,

$(\rm{ii})$ $\mathcal{H}_{\nu}$ is self-adjoint, i.e.
$\mathcal{H}_{\nu}=\mathcal{H}_{\nu}^*$,

$(\rm{iii})$ $\mathcal{H}_{\nu}$ is an $L^2$ isometry, i.e.
$\|\mathcal{H}_{\nu}\phi\|_{L^2_\xi}=\|\phi\|_{L^2_x}$,

$(\rm{iv})$ $\mathcal{H}_{\nu}(
A_{\nu}\phi)(\xi)=|\xi|^2(\mathcal{H}_{\nu} \phi)(\xi)$, for
$\phi\in L^2$.
\end{lemma}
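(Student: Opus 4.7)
The plan is to reduce everything to the one-dimensional Hankel transform $H_\nu g(\rho) = \int_0^\infty J_\nu(r\rho) g(r)\, r\, dr$, for which the inversion formula $H_\nu^2 = \mathrm{Id}$ and the Parseval identity $\|H_\nu g\|_{L^2(r\,dr)} = \|g\|_{L^2(r\,dr)}$ are classical consequences of the completeness/orthogonality of the Bessel functions. Given a function $\phi$ on $\R^d$, fix a direction $\omega\in S^{d-1}$ and set $g_\omega(r) = r^{(d-2)/2}\phi(r\omega)$. A direct substitution in \eqref{2.14} yields
\begin{equation*}
(\mathcal{H}_\nu \phi)(\rho\omega) = \rho^{-(d-2)/2}\int_0^\infty r J_\nu(r\rho)\, g_\omega(r)\, dr = \rho^{-(d-2)/2}(H_\nu g_\omega)(\rho),
\end{equation*}
so the $d$-dimensional transform in the radial variable is essentially a conjugation of the classical $H_\nu$ by the weight $r^{(d-2)/2}$.

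From this identity, (i) follows by applying $\mathcal{H}_\nu$ twice: the weights cancel and the classical Hankel inversion $H_\nu^2 = \mathrm{Id}$ gives $\mathcal{H}_\nu^2\phi = \phi$. For (iii), note that
\begin{equation*}
\int_0^\infty |(\mathcal{H}_\nu\phi)(\rho\omega)|^2\rho^{d-1}\,d\rho = \int_0^\infty |(H_\nu g_\omega)(\rho)|^2 \rho\,d\rho = \int_0^\infty |g_\omega(r)|^2 r\,dr = \int_0^\infty |\phi(r\omega)|^2 r^{d-1}\,dr,
\end{equation*}
and integration over $\omega\in S^{d-1}$ gives $\|\mathcal{H}_\nu\phi\|_{L^2(\R^d)} = \|\phi\|_{L^2(\R^d)}$. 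Statement (ii) is then automatic: any bounded involutive $L^2$-isometry on a Hilbert space satisfies $\mathcal{H}_\nu^*\mathcal{H}_\nu = \mathrm{Id}$ and $\mathcal{H}_\nu^2 = \mathrm{Id}$, hence $\mathcal{H}_\nu^* = \mathcal{H}_\nu^{-1} = \mathcal{H}_\nu$. (One can equivalently read this off from the symmetry of the integral kernel $K(\xi,x) = (|x||\xi|)^{-(d-2)/2}J_\nu(|x||\xi|)$ in $(x,\xi)$.)

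For (iv), the point is that the kernel $\tilde{J}(r):=(r\rho)^{-(d-2)/2}J_\nu(r\rho)$ is a generalized eigenfunction of $A_\nu$ with eigenvalue $\rho^2$. Indeed, the Bessel ODE $J_\nu''(s) + s^{-1}J_\nu'(s) + (1-\nu^2 s^{-2})J_\nu(s)=0$ applied at $s=r\rho$, combined with the chain rule applied to the prefactor $(r\rho)^{-(d-2)/2}$, yields after a direct computation
\begin{equation*}
A_\nu\tilde{J}(r) = \rho^2\,\tilde{J}(r).
\end{equation*}
The radial operator $A_\nu$ is formally self-adjoint on $L^2(r^{d-1}\,dr)$, since $-\partial_r^2 - \frac{d-1}{r}\partial_r = -r^{-(d-1)}\partial_r(r^{d-1}\partial_r)$ and the remaining term is a multiplication operator. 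Integrating by parts twice against $\phi\in C_c^\infty((0,\infty))$ (or Schwartz functions vanishing near the origin), we move $A_\nu$ from $\phi$ onto $\tilde{J}$ and obtain
\begin{equation*}
\mathcal{H}_\nu(A_\nu\phi)(\xi) = \int_0^\infty \tilde{J}(r)(A_\nu\phi)(r)\, r^{d-1}\,dr = \int_0^\infty (A_\nu\tilde{J})(r)\phi(r)\, r^{d-1}\,dr = |\xi|^2\,\mathcal{H}_\nu\phi(\xi).
\end{equation*}
The resulting identity is then extended to all of $L^2$ via the isometry property proved in (iii).

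The main obstacle is justifying the integration by parts in (iv): the boundary terms at $r=0$ involve the behavior of $J_\nu(r\rho)\sim r^\nu$ and its derivatives, which needs $\nu>-1$ (guaranteed by $\nu=\sigma$ in the range \eqref{equ:acond}) and control of $\phi$ near the origin. Proving the identity first for test functions compactly supported away from $0$ and $\infty$, and then extending by density and the continuity already established in (iii), handles this cleanly.
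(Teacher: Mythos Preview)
The paper does not actually prove this lemma; it is stated with attribution to \cite{BPSTZ} and used as a black box. Your argument, by contrast, supplies a self-contained proof, and the approach is the standard one: conjugate by $r^{(d-2)/2}$ to reduce to the classical one-dimensional Hankel transform on $L^2((0,\infty),r\,dr)$, invoke the classical inversion and Plancherel identities for (i) and (iii), deduce (ii) formally, and verify (iv) by checking that the kernel is a generalized eigenfunction of $A_\nu$ and moving the operator across by integration by parts. All of this is correct.

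One small slip in your final paragraph: in the paper's application one has $\nu=\tfrac{d-2}{2}-\sigma=\bigl[(\tfrac{d-2}{2})^2+a\bigr]^{1/2}$, not $\nu=\sigma$. This does not affect your argument (the lemma is stated for general $\nu$, and for $a>-(\tfrac{d-2}{2})^2$ one has $\nu>0$, so certainly $\nu>-1$), but the identification as written is off.
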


 \subsection{Strichartz estimates and dispersive estimate}\label{sze}

Strichartz estimates for the propagator $e^{-it\la}$  were proved by
Burq, Planchon, Stalker, and Tahvildar-Zadeh in \cite{BPSTZ}.
Combining these with the Christ--Kiselev Lemma \cite{CK}, we obtain
the following Strichartz estimates:

\begin{proposition}[Strichartz estimate, \cite{BPSTZ,ZZ1}] Let $d\geq3$, and fix $a>-\big(\tfrac{d-2}2\big)^2$. The solution $u$ to $(i\partial_t-\la)u = F$
 on an interval $I\ni t_0$ obeys
\[
\|u\|_{L_t^q L_x^r(I\times\R^d)} \lesssim \|u(t_0)\|_{L_x^2(\R^d)} +
\|F\|_{L_t^{\tilde q'} L_x^{\tilde r'}(I\times\R^d)}
\]
for any $2\leq q,\tilde q\leq\infty$ with
$\frac{2}{q}+\frac{d}{r}=\frac{d}{\tilde q}+\frac{3}{\tilde r}=
\frac{d}2$ including $(q,\tilde q)= (2,2)$. \end{proposition}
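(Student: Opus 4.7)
The plan is to reduce the full inhomogeneous Strichartz estimate to two pieces: the homogeneous estimate and a dual estimate for admissible pairs, combined via the Christ--Kiselev lemma, with a separate argument needed at the double endpoint $(q,\tilde q)=(2,2)$.

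First, I would recall from \cite{BPSTZ} the homogeneous Strichartz estimate for $e^{-it\la}$, i.e.\ $\|e^{-it\la}f\|_{L_t^qL_x^r(\R\times\R^d)}\lesssim\|f\|_{L^2}$ whenever $(q,r)$ is admissible in the sense that $\frac{2}{q}+\frac{d}{r}=\frac{d}{2}$, $q\geq 2$. The idea in \cite{BPSTZ} is to decompose into spherical harmonics and apply the Hankel transform of Lemma \ref{Hankel}, which diagonalizes $\la$ on each angular subspace and reduces matters to one-dimensional Bessel propagators. This yields fixed-time kernel bounds that are sufficient to run the $TT^*$ argument, and combined with the endpoint interpolation of Keel--Tao gives the full admissible range.

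By the self-adjointness of $\la$ and duality, this immediately produces the dual estimate $\big\|\int_\R e^{is\la}F(s)\,ds\big\|_{L^2}\lesssim\|F\|_{L_t^{\tilde q'}L_x^{\tilde r'}}$ for every admissible $(\tilde q,\tilde r)$. Composing the two gives the non-retarded inhomogeneous bound
\[
\Bigl\|\int_\R e^{-i(t-s)\la}F(s)\,ds\Bigr\|_{L_t^qL_x^r}\lesssim\|F\|_{L_t^{\tilde q'}L_x^{\tilde r'}}
\]
for any admissible pairs on both sides. The retarded version over $\int_{t_0}^t$, and more generally the ability to mix admissible and some non-admissible pairs on the two sides, then follows from the Christ--Kiselev lemma \cite{CK}, which applies whenever $q>\tilde q'$, i.e.\ $\tfrac{1}{q}+\tfrac{1}{\tilde q}<1$. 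This covers every case listed in the statement except the double endpoint.

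The main obstacle is precisely the double endpoint $(q,\tilde q)=(2,2)$, which is outside the scope of Christ--Kiselev. Here one must rerun the Keel--Tao bilinear interpolation directly for $e^{-it\la}$, which requires a fixed-time dispersive bound $\|e^{-it\la}\|_{L^1\to L^\infty}\lesssim |t|^{-d/2}$. For $a\geq 0$ this follows from the Gaussian heat kernel bound of Lemma \ref{L:kernel} via analytic continuation, so the standard Keel--Tao bilinear estimates go through verbatim. For $-\big(\tfrac{d-2}{2}\big)^2<a<0$ the dispersive estimate degenerates at the origin (the $(1\vee \sqrt{t}/|x|)^\sigma$ factors become singular), and a direct pointwise bound of the sharp order fails; this is the weak dispersive estimate alluded to in the abstract. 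The remedy, carried out in \cite{ZZ1}, is to perform the Keel--Tao bilinear interpolation after a Littlewood--Paley decomposition adapted to $\la$ and using an angular/frequency-localized version of the kernel bound, where the singular weights can be absorbed. Once the double endpoint is established this way, assembly of all cases yields the claimed inhomogeneous Strichartz estimate.
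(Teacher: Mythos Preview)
Your proposal is correct and matches the paper's own treatment: the paper does not give a detailed proof but simply invokes the homogeneous Strichartz estimates of \cite{BPSTZ} together with the Christ--Kiselev lemma \cite{CK}, with the double-endpoint case $(q,\tilde q)=(2,2)$ covered by the citation to \cite{ZZ1}. Your sketch is a faithful elaboration of precisely this route, and in fact supplies more detail than the paper itself does.
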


As a consequence of Strichartz estimate, we obtain the local
well-posedness theory in $H^1(\R^d)$.

\begin{theorem}[Local well-posedness, \cite{KMVZ,LMM}]\label{T:LWP} Let $(a,d,p)$ satisfy the condition \eqref{equ:acond}. Assume
 $u_0\in H_x^1(\R^d)$, and $t_0\in\R$.  Then the following hold:

\begin{itemize}
\item[(i)] There exist $T=T(\|u_0\|_{H_a^1})>0$ and a unique solution $u:(t_0-T,t_0+T)\times\R^d\to\C$ to \eqref{equ1.1}
 with $u(t_0)=u_0$. In particular, if $u$ remains uniformly bounded in $H_a^1$ throughout its lifespan, then $u$ extends to a global  solution.
\item[(ii)]  There exists $\eta_0>0$ such that if
\[
\|e^{-i(t-t_0)\mathcal{L}_a}u_0\|_{L_{t,x}^{\frac{d+2}2(p-1)}((t_0,\infty)\times\R^d)}
< \eta\qtq{for some} 0 <\eta<\eta_0,
\]
then the solution $u$ to \eqref{equ1.1} with data $u(t_0)=u_0$ is
forward-global and satisfies
\[
\|u\|_{L_{t,x}^{\frac{d+2}2(p-1)}((t_0,\infty)\times\R^d)} \lesssim
\eta.
\]
The analogous statement holds backward in time (as well as on all of
$\R$).
\item[(iii)] For any $\psi\in H_a^1$, there exist $T>0$ and a solution $u:(T,\infty)\times\R^d\to\C$ to \eqref{equ1.1} such that
\[
\lim_{t\to\infty} \|u(t) - e^{-it\mathcal{L}_a}\psi\|_{H_a^1} = 0.
\]
The analogous statement holds backward in time.
\end{itemize}
\end{theorem}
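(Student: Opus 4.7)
The plan is to prove all three parts by standard contraction-mapping arguments built on the Strichartz estimates above, with the non-standard ingredient being how one handles the fractional-derivative bound on the nonlinearity via $\la$. I would work in the Strichartz space $X(I)=L_t^\infty H_a^1(I\times\R^d)\cap L_t^5 H_a^{1,\frac{10d}{5d-4}}(I\times\R^d)$, noting that $(5,\tfrac{10d}{5d-4})$ is Schr\"odinger-admissible. For (i), given $u_0\in H_a^1$ I would define the map $\Phi(u)(t)=e^{-i(t-t_0)\la}u_0+i\int_{t_0}^t e^{-i(t-s)\la}\bigl(|u|^{p-1}u\bigr)(s)\,ds$ on a ball in $X(I)$ for $I=(t_0-T,t_0+T)$, and use Strichartz together with fractional calculus to control $\|\sqrt{\la}(|u|^{p-1}u)\|_{L_t^{\tilde q'}L_x^{\tilde r'}}$ by $\|u\|_{L_{t,x}^{\frac{d+2}{2}(p-1)}}^{p-1}\|\sqrt{\la}u\|_{L_t^5 L_x^{\frac{10d}{5d-4}}}$, where the inhomogeneous pair $(\tilde q,\tilde r)$ is chosen so that H\"older matches.

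The key technical step is the nonlinear estimate: fractional calculus (Lemma on Christ--Weinstein) is available for $|\nabla|$, so I would transfer $\sqrt{\la}$ to $|\nabla|$ via Lemma \ref{pro:equivsobolev}, apply the chain-rule and product-rule bounds to $|u|^{p-1}u$ (using $G\in C^1$ when $p\geq 2$, or an analogous H\"older-continuous variant when $1<p<2$, which is one source of the restriction on $p-1$ in \eqref{equ:acond}), and then transfer back to $\sqrt{\la}$. This is precisely where the admissible range of $p$ appears, because we must keep the exponents $r$ with $\tfrac{s+\sigma}{d}<\tfrac{1}{r}<\min\{1,\tfrac{d-\sigma}{d}\}$; combined with Sobolev embedding into $L_{t,x}^{\frac{d+2}{2}(p-1)}$, this cuts out exactly the range \eqref{equ:acond}. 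Once the nonlinear estimate is proved, a standard Picard iteration on a small time interval $T=T(\|u_0\|_{H_a^1})$ gives existence and uniqueness, and the blow-up criterion follows from iterating on intervals of length depending only on $\|u(t_0)\|_{H_a^1}$.

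For (ii), I would use the same contraction scheme on the whole half-line $(t_0,\infty)$, with the ball defined by $\|e^{-i(t-t_0)\la}u_0\|_{L_{t,x}^{\frac{d+2}{2}(p-1)}}<\eta$. Strichartz controls the linear part, and the nonlinear estimate above shows that the H\"older product $\|u\|_{L_{t,x}^{\frac{d+2}{2}(p-1)}}^{p-1}\|\sqrt{\la}u\|_{L_t^5L_x^{\frac{10d}{5d-4}}}$ is superlinear in the small parameter, closing the argument for $\eta<\eta_0$. The scattering norm bound $\|u\|_{L_{t,x}^{\frac{d+2}{2}(p-1)}}\lesssim\eta$ comes out directly.

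For (iii), constructing the wave operator at $+\infty$, I would reverse the Duhamel formula: for $T$ large, I define $\Phi(u)(t)=e^{-it\la}\psi-i\int_t^\infty e^{-i(t-s)\la}\bigl(|u|^{p-1}u\bigr)(s)\,ds$ on $(T,\infty)$ and again contract in a small ball of $X((T,\infty))$. Because $e^{-it\la}\psi\in L_t^5 H_a^{1,\frac{10d}{5d-4}}(\R\times\R^d)$ by Strichartz, its tail norm on $(T,\infty)$ tends to $0$ as $T\to\infty$, so for $T$ sufficiently large the small data argument of (ii) applies, producing $u$ with $\|u(t)-e^{-it\la}\psi\|_{H_a^1}\to 0$. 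The main obstacle throughout is the nonlinear estimate in (i)--(ii): matching the H\"older exponents compatibly with both Strichartz admissibility and the restricted range in Lemma \ref{pro:equivsobolev} is where all the technical work and the precise hypothesis \eqref{equ:acond} enter.
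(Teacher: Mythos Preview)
Your proposal is correct and follows the standard contraction-mapping scheme in Strichartz spaces, with the equivalence of Sobolev spaces (Lemma~\ref{pro:equivsobolev}) used to transfer fractional derivatives between $\sqrt{\la}$ and $|\nabla|$ so that the Christ--Weinstein fractional calculus applies; this is precisely the approach of \cite{KMVZ,LMM}. Note, however, that the present paper does not supply its own proof of this theorem: it is quoted as a known result from \cite{KMVZ,LMM}, so there is no in-paper argument to compare against beyond the citation.
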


Next, we prove the key estimate (dispersive estimate) which will be
useful in the proof of scattering criterion (Lemma \ref{lem:scatcri}
below).

\begin{theorem}[Dispersive estimate]\label{thm:disp} Let $f$ be radial function.

$(i)$ If $a\geq0$, then we have
\begin{equation}\label{equ:disp}
\|e^{it\la}f\|_{L^\infty(\R^d)}\leq C|t|^{-\frac{d}2}\|f\|_{L_x^1(\R^d)}.
\end{equation}

$(ii)$ If $-\tfrac{(d-2)^2}4<a<0$, then there holds
\begin{equation}\label{equ:dispaleq0}
\big\|(1+|x|^{-\sigma})^{-1}e^{it\la}f\big\|_{L^\infty(\R^d)}\leq C\frac{1+|t|^{\sigma}}{|t|^{\frac{d}2}}\big\|(1+|x|^{-\sigma})f\big\|_{L_x^1(\R^d)},
\end{equation}
with $\sigma$ being as in \eqref{equ:sigma}.
\end{theorem}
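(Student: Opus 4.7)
The plan is to use the Hankel transform to derive an explicit integral representation of the kernel of $e^{it\la}$ on radial functions and then bound it via standard Bessel asymptotics. Since $\la$ acts on radial functions as $A_\nu$ with $\nu=\tfrac{d-2}2-\sigma$, and Lemma \ref{Hankel} provides $\mathcal{H}_\nu=\mathcal{H}_\nu^{-1}=\mathcal{H}_\nu^*$ together with $\mathcal{H}_\nu(A_\nu\phi)=|\xi|^2\mathcal{H}_\nu\phi$, the functional calculus yields, for radial $f$,
\[
e^{it\la}f(r)=\int_0^\infty K_t(r,s)\,f(s)\,s^{d-1}\,ds,\quad K_t(r,s)=(rs)^{-(d-2)/2}\int_0^\infty e^{it\rho^2}J_\nu(r\rho)J_\nu(s\rho)\,\rho\,d\rho.
\]
The dispersive estimate then reduces to a pointwise bound on $K_t(r,s)$.

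My next step is to evaluate this oscillatory integral in closed form. Starting from the classical Weber/Hankel identity
\[
\int_0^\infty e^{-\alpha\rho^2}J_\nu(r\rho)J_\nu(s\rho)\,\rho\,d\rho=\frac{1}{2\alpha}\,e^{-(r^2+s^2)/(4\alpha)}\,I_\nu\!\left(\tfrac{rs}{2\alpha}\right),\quad \Re\alpha>0,
\]
and analytically continuing in $\alpha$ via $\alpha=\eps-it$, $\eps\downarrow 0$, together with $I_\nu(iz)=e^{i\nu\pi/2}J_\nu(z)$ for $z>0$, I obtain
\[
K_t(r,s)=\frac{e^{i\nu\pi/2}}{-2it}\,(rs)^{-(d-2)/2}\,e^{i(r^2+s^2)/(4t)}\,J_\nu\!\left(\tfrac{rs}{2t}\right),
\]
so that $|K_t(r,s)|\lesssim|t|^{-1}(rs)^{-(d-2)/2}\bigl|J_\nu(rs/(2t))\bigr|$.

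To finish, I insert the uniform bound $|J_\nu(x)|\lesssim\min(x^\nu,x^{-1/2})$ and split on $x=rs/(2t)$. In the near regime $rs\leq|t|$, the small-argument asymptotic together with $\nu-\tfrac{d-2}2=-\sigma$ and $\nu+1=\tfrac d2-\sigma$ gives $|K_t(r,s)|\lesssim(rs)^{-\sigma}|t|^{-d/2+\sigma}$; in the far regime $rs\geq|t|$, the large-argument asymptotic together with $(rs)^{-(d-1)/2}\leq|t|^{-(d-1)/2}$ gives $|K_t(r,s)|\lesssim|t|^{-d/2}$. For $a\geq0$ we have $\sigma\leq0$, hence $(rs)^{-\sigma}\leq|t|^{-\sigma}$ in the near regime, and both regimes combine to give the uniform bound $|K_t(r,s)|\lesssim|t|^{-d/2}$; integrating against $f$ yields (i). For $-\tfrac{(d-2)^2}4<a<0$ we have $\sigma>0$; bounding $(rs)^{-\sigma}\leq(1+r^{-\sigma})(1+s^{-\sigma})$ and $|t|^{-d/2+\sigma}\leq|t|^{-d/2}(1+|t|^\sigma)$ then gives
\[
|K_t(r,s)|\lesssim\frac{1+|t|^\sigma}{|t|^{d/2}}(1+r^{-\sigma})(1+s^{-\sigma}),
\]
which, after integrating against $f$ and dividing by $1+|x|^{-\sigma}$, yields (ii). The main obstacle is the rigorous justification of the analytic continuation producing the explicit $K_t$; this can be accomplished distributionally (first for Schwartz radial data and then by density) or by a direct contour/regularization argument in $\rho$, after which everything else reduces to routine Bessel bookkeeping and elementary comparisons between $rs$ and $|t|$.
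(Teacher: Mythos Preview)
Your proof is correct and follows essentially the same approach as the paper: both derive the explicit radial Schr\"odinger kernel via the Hankel transform and the Weber--Schafheitlin identity (with the analytic continuation justified by reference to \cite{Ford}), and both then bound $J_\nu$ by $\min(x^\nu,x^{-1/2})$ and split according to whether $rs\lesssim|t|$ or $rs\gtrsim|t|$. The only cosmetic difference is that you package the result as a pointwise kernel bound before integrating, whereas the paper splits the $s$-integral directly; the arithmetic and the resulting estimates are identical.
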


\begin{proof}
Since $f(x)$ is radial, $u(t,x):=e^{it\la}f$ solve
\begin{equation}\label{equ:utfxrd}
\begin{cases}
i\pa_tu-A_{\nu}u=0,\\
u(0,r)=f(r),
\end{cases}
\end{equation}
where the operator $A_\nu$ is defined as in Lemma \ref{Hankel} with $\nu=\frac{d-2}2-\sigma.$
Applying the Hankel transform to the equation \eqref{equ:utfxrd}, by
$(\rm{iv})$ in Lemma \ref{Hankel}, we have
\begin{equation}\label{3.6}
\begin{cases}
i\partial_{t}
\tilde{ u}-\rho^2\tilde{u}=0 \\
\tilde{u}(0,\rho)=(\mathcal{H}_{\nu}f)(\rho),
\end{cases}
\end{equation}
where
$\tilde{u}(t,\rho)=(\mathcal{H}_{\nu}
u)(t,\rho)$.
Solving this ODE and inverting the Hankel transform, we obtain
\begin{align*}
u(t,r)=&\int_0^\infty (r\rho)^{-\frac{d-2}2}J_\nu(r\rho)e^{-it\rho^2}(\mathcal{H}_\nu f)(\rho)\rho^{d-1}\;d\rho\\
=&\int_0^\infty (r\rho)^{-\frac{d-2}2}J_\nu(r\rho)e^{-it\rho^2}\rho^{d-1}\int_0^\infty (s\rho)^{-\frac{d-2}2}J_\nu(s\rho)f(s)s^{d-1}\;ds\;d\rho\\
=&\int_0^\infty f(s)s^{d-1}K(t,r,s)\;ds,
\end{align*}
with the kernel
\begin{align*}
K(t,r,s)=&(rs)^{-\frac{d-2}2}\int_0^\infty J_\nu(r\rho)J_\nu(s\rho)e^{-it\rho^2}\rho\;d\rho\\
=&(rs)^{-\frac{d-2}2}\frac{e^{-\frac12\nu\pi i}}{2it}e^{-\frac{r^2+s^2}{4it}}J_\nu\Big(\frac{rs}{2t}\Big),
\end{align*}
where we used the analytic continuation as in \cite{Ford} in the second equality.
Thus,
\begin{align*}
u(t,r)=&\frac{e^{-\frac12\nu\pi i}}{2it}\int_0^\infty f(s)s^{d-1}(rs)^{-\frac{d-2}2}e^{-\frac{r^2+s^2}{4it}}J_\nu\Big(\frac{rs}{2t}\Big)\;ds\\
=&\frac{e^{-\frac12\nu\pi i}}{2it}\Big(\int_0^\frac{2t}{r}+\int_\frac{2t}r^\infty\Big)\Big(f(s)s^{d-1}(rs)^{-\frac{d-2}2}e^{-\frac{r^2+s^2}{4it}}J_\nu\Big(\frac{rs}{2t}\Big)
\Big)\;ds\\
\triangleq&I+II.
\end{align*}
Using $|J_\nu(r)|\lesssim r^{-\frac12}$ with $r\geq1$, we obtain
\begin{align*}
|II|\leq&Ct^{-\frac{d}2}\int_\frac{2t}r^\infty|f(s)|s^{d-1}\big(\tfrac{rs}{2t}\big)^{-\frac{d-1}2}\;ds\leq Ct^{-\frac{d}2}\|f\|_{L_x^1(\R^d)}.
\end{align*}
On the other hand, by $|J_\nu(r)|\lesssim r^{\nu}$ with $r\leq1$, we get for $a\geq0$
\begin{align*}
|I|\leq&Ct^{-1}\int_0^\frac{2t}r|f(s)|s^{d-1}(rs)^{-\frac{d-2}2}\Big(\frac{rs}{2t}\Big)^\nu\;ds\\
\leq&Ct^{-\frac{d}2}\int_0^\frac{2t}r|f(s)|s^{d-1}\Big(\frac{rs}{2t}\Big)^{-\sigma}\;ds\\
\leq& Ct^{-\frac{d}2}\|f\|_{L_x^1(\R^d)},
\end{align*}
while for $a<0$
\begin{align*}
(1+r^{-\sigma})^{-1}|I|\leq&Ct^{-\frac{d}2}\int_0^\frac{2t}r(1+s^{-\sigma})|f(s)|s^{d-1}\frac{t^\sigma}{(1+r^\sigma)(1+s^\sigma)}\;ds\\
\leq&Ct^{-\frac{d}2}t^\sigma\big\|(1+|x|^{-\sigma})f\big\|_{L_x^1(\R^d)}.
\end{align*}
Therefore by collecting all of them, we conclude the proof of Theorem \ref{thm:disp}.

\end{proof}

\section{Variational analysis}\label{S:var}

In this section, we carry out the variational analysis for the sharp Gagliardo--Nirenberg inequality, which leads naturally to the thresholds appearing in Theorem~\ref{T:radial}.

\begin{theorem}[Sharp Gagliardo--Nirenberg inequality]\label{T:GN} Fix $a>-\frac{(d-2)^2}4$ and define
\[
C_a := \sup\bigl\{ \|f\|_{L_x^{p+1}}^{p+1} \div
\bigl[\|f\|_{L_x^2}^{\frac{d+2-(d-2)p}2} \|f\|_{\dot
H_a^1}^{\frac{d(p-1)}2}\bigr]:f\in H_a^1\backslash\{0\},~f~{\rm
radial}\bigl\}.
\]
Then $C_a\in(0, \infty)$ and the  Gagliardo--Nirenberg inequality for radial functions
\begin{equation}\label{E:GN}
\|f\|_{L_x^{p+1}}^{p+1}\leq C_a\|f\|_{L_x^2}^{\frac{d+2-(d-2)p}2}
\|f\|_{\dot H_a^1}^{\frac{d(p-1)}2}
\end{equation}
 is attained by a function $Q_a\in H_a^1$, which is a non-zero, non-negative, radial solution to the elliptic problem
\begin{equation}
\label{elliptic}
-\la Q_a - Q_a + Q_a^p = 0.
\end{equation}

\end{theorem}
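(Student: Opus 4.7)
The plan is to run a Weinstein-type variational argument, with Strauss' radial compact embedding substituting for the translation-invariance tools one uses in the classical setting. First I would dispatch finiteness: positivity of $C_a$ is immediate by evaluating the quotient on any radial bump, while for the upper bound the hypothesis $a > -(d-2)^2/4$ combined with the sharp Hardy inequality gives
\[
c \|\nabla f\|_{L^2}^2 \leq \|f\|_{\dot H^1_a}^2 = \|\nabla f\|_{L^2}^2 + a \int \frac{|f|^2}{|x|^2}\,dx \leq C \|\nabla f\|_{L^2}^2,
\]
so that $\dot H^1_a(\R^d)$ and $\dot H^1(\R^d)$ carry equivalent norms. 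Since \eqref{equ:acond} forces $2 < p+1 < \tfrac{2d}{d-2}$, the classical subcritical Gagliardo--Nirenberg inequality then produces the desired upper bound on $C_a$.

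Next, to produce a maximizer, I would select a radial maximizing sequence $\{f_n\}\subset H^1_a$ and exploit the two-parameter scale invariance $f(x)\mapsto\mu f(\lambda x)$ to normalize $\|f_n\|_{L^2}=\|f_n\|_{\dot H^1_a}=1$ while retaining $\|f_n\|_{L^{p+1}}^{p+1}\to C_a$. These $f_n$ are bounded in $H^1$, so by Strauss' compact embedding $H^1_{\mathrm{rad}}(\R^d)\hookrightarrow L^{p+1}(\R^d)$ (compact because $2<p+1<\tfrac{2d}{d-2}$) a subsequence converges strongly in $L^{p+1}$ and weakly in $H^1$ to some $Q_a$. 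Strong convergence gives $\|Q_a\|_{L^{p+1}}^{p+1}=C_a>0$, hence $Q_a\not\equiv 0$; weak lower semicontinuity of $\|\cdot\|_{L^2}$ and $\|\cdot\|_{\dot H^1_a}$ then forces $Q_a$ to saturate \eqref{E:GN}.

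To finish, I would first replace $Q_a$ by $|Q_a|$, noting that this leaves $\|\cdot\|_{L^2}$ and $\|\cdot\|_{L^{p+1}}$ fixed, while Kato's inequality $\|\nabla|Q_a|\|_{L^2}\leq\|\nabla Q_a\|_{L^2}$ together with $\bigl|\,|Q_a|\,\bigr|^2=|Q_a|^2$ only decreases $\|\cdot\|_{\dot H^1_a}$; the modified function is still a maximizer, and I relabel it $Q_a\geq 0$. A standard Lagrange-multiplier computation for the Weinstein quotient at $Q_a$ then yields an equation $\la Q_a + A Q_a = B Q_a^p$ with $A,B>0$ explicit in terms of $C_a$ and the normalized constraint norms, and the rescaling $Q_a(x)\mapsto c\,Q_a(\gamma x)$ (exploiting the homogeneity of $\la$) absorbs $A$ and $B$ into the form \eqref{elliptic}. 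I expect the compactness step to be the main obstacle: in the nonradial setting translation invariance would force a full profile decomposition, and even in the radial case the transfer of Strauss compactness from $H^1$ to the $\la$-framework is exactly what requires the strict inequality $a > -(d-2)^2/4$.
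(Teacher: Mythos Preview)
Your proposal is correct and matches the paper's proof essentially step for step: normalize a radial maximizing sequence via the two-parameter scaling, extract a limit using Strauss' compact embedding $H^1_{\mathrm{rad}}\hookrightarrow L^{p+1}$, verify it is an optimizer via weak lower semicontinuity, and rescale the Euler--Lagrange equation into \eqref{elliptic}. Your inclusion of the Kato step $Q_a\mapsto|Q_a|$ to secure non-negativity is a welcome addition the paper leaves implicit.
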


\begin{proof}   Define the functional
\[
J_a(f) :=
\frac{\|f\|_{L_x^{p+1}}^{p+1}}{\|f\|_{L_x^2}^{\frac{d+2-(d-2)p}2}
\|f\|_{\dot H_a^1}^{\frac{d(p-1)}2}}, \qtq{so that} C_a =
\sup\{J_a(f):f\in H_a^1\backslash\{0\},~f~{\rm radial}\}.
\]
Note that the standard Gagliardo--Nirenberg inequality and the equivalence of Sobolev spaces imply  $0<C_a<\infty$.

We prove by mimicking the well-known proof for $a=0$ and Theorem 3.1
in \cite{KMVZ}. Take the sequence of radial functions
$\{f_n\}\subset H_a^1\backslash\{0\}$ such that $J_a(f_n)\nearrow
C_a$. Choose $\mu_n\in\R$ and $\lambda_n\in\R$ so that
$g_n(x):=\mu_n f_n(\lambda_nx)$ satisfy
$\|g_n\|_{L_x^2}=\|g_n\|_{\dot H_a^1}=1$. Note that
$J_a(f_n)=J_a(g_n)$.
 As $H^1_{\rm rad}\hookrightarrow L^{p+1}_x$ compactly, passing to a subsequence we may assume
 that $g_n$ converges to some $g\in H_a^1$ strongly in $L_x^{p+1}$ as well as weakly in $H_a^1$.
  As $g_n$ is an optimizing sequence, we deduce that $C_a=\|g\|_{L_x^{p+1}}^{p+1}$.
  We also have that $\|g\|_{L_x^2} = \|g\|_{\dot H_a^1} = 1$, or else $g$ would be a super-optimizer. Thus $g$ is an optimizer.

The Euler--Lagrange equation for $g$ is given by
\[
-\frac{d(p-1)}2C_a\la g -\frac{d+2-(d-2)p}2 C_a g + (p+1)g^p = 0.
\]
Thus, if we define $Q_a$ via
\[
g(x) = \alpha Q_a(\lambda x), \qtq{with} \alpha =\Big(
\tfrac{d+2-(d-2)p}{2(p+1)}C_a\Big)^\frac1{p-1} \qtq{and} \lambda =
\sqrt{\tfrac{d+2-(d-2)p}{3(p-1)}},
\]
then $Q_a$ is an optimizer of \eqref{E:GN} that solves \eqref{elliptic}.

\end{proof}

By integration by part,  we easily get

\begin{lemma}\label{Pohozaev}
Assume $\phi\in\mathcal{S}(\mathbb{R}^d)$, then
\begin{align*}
\int_{\mathbb{R}^d} \Delta\phi
x\cdot\nabla\phi dx&=\frac{d-2}{2}\int_{\mathbb{R}^d}|\nabla\phi|^2dx,\\
\int_{\mathbb{R}^d}\phi x\cdot\nabla\phi dx&=-\frac{d}{2}\int_{\mathbb{R}^d}|\phi|^2dx,\\
\int_{\mathbb{R}^d}x\cdot\nabla\phi \phi^p
dx&=-\frac{d}{p+1}\int_{\mathbb{R}^d}|\phi|^{p+1}dx.
\end{align*}
\end{lemma}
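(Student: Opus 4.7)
\textbf{Proof proposal for Lemma \ref{Pohozaev}.} All three identities are classical Pohozaev/virial computations obtained by integration by parts, using the fact that $\phi\in\mathcal{S}(\R^d)$ means every boundary term at infinity vanishes. No regularity or limiting arguments are needed.

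For the second identity, I would rewrite the integrand as a total derivative: $\phi\, x\cdot\nabla\phi = \tfrac12\, x\cdot\nabla(\phi^2)$. Integration by parts against $\nabla\cdot x = d$ gives
\begin{equation*}
\int_{\R^d}\phi\, x\cdot\nabla\phi\,dx = \tfrac12\int_{\R^d} x\cdot\nabla(\phi^2)\,dx = -\tfrac{d}{2}\int_{\R^d}\phi^2\,dx.
\end{equation*}
The third identity is carried out in the same way, writing $x\cdot\nabla\phi\cdot\phi^p = \tfrac{1}{p+1}x\cdot\nabla(\phi^{p+1})$ and again using $\nabla\cdot x=d$ to produce the factor $-\tfrac{d}{p+1}$.

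The first identity is the only one that requires a little more care. I would integrate by parts once to move $\Delta$ onto $x\cdot\nabla\phi$:
\begin{equation*}
\int_{\R^d}\Delta\phi\, (x\cdot\nabla\phi)\,dx = -\int_{\R^d}\nabla\phi\cdot\nabla(x\cdot\nabla\phi)\,dx = -\int_{\R^d}|\nabla\phi|^2\,dx - \int_{\R^d}\nabla\phi\cdot(x\cdot\nabla^2\phi)\,dx.
\end{equation*}
The key algebraic observation is $\nabla\phi\cdot(x\cdot\nabla^2\phi) = \sum_{j,k}x_k\,\pa_j\phi\,\pa_k\pa_j\phi = \tfrac12\, x\cdot\nabla(|\nabla\phi|^2)$, and integration by parts in this last expression yields $\tfrac{d}{2}\int|\nabla\phi|^2\,dx$. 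Combining the two pieces produces the required coefficient $\tfrac{d-2}{2}$.

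There is no genuine obstacle here; the lemma is a bookkeeping exercise. The only point worth flagging is the identification in the first identity of the cross term $\nabla\phi\cdot(x\cdot\nabla^2\phi)$ as half of $x\cdot\nabla(|\nabla\phi|^2)$, which is what produces the characteristic $\tfrac{d-2}{2}$ as opposed to $\tfrac{d}{2}$ factor that appears in the other two identities.
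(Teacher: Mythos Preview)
Your proposal is correct and matches the paper's approach exactly: the paper gives no detailed argument for this lemma, merely stating ``By integration by part, we easily get'' before the statement, and your computations fill in precisely those standard integrations by parts. The only minor slip is phrasing---when you write that integration by parts in $\tfrac12\int x\cdot\nabla(|\nabla\phi|^2)\,dx$ ``yields $\tfrac{d}{2}\int|\nabla\phi|^2\,dx$'' you have silently absorbed the outer minus sign from the previous displayed equation, but the final combination $-1+\tfrac{d}{2}=\tfrac{d-2}{2}$ is correct.
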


By a simple computation, we have
\begin{lemma}\label{lem:qaident}
Let $Q_a$ be the solution to $-\la Q_a-Q_a+Q_a^p=0$. Then
\begin{equation}\label{equ:identme}
\|Q_a\|_{L_x^2}^2=\frac{d+2-(d-2)p}{2(p+1)}\|Q_a\|_{L_x^{p+1}}^{p+1},~\|Q_a\|_{\dot{H}^1_a}^2=\frac{d(p-1)}{2(p+1)}\|Q_a\|_{L_x^{p+1}}^{p+1},
\end{equation}
and
\begin{equation}\label{equ:idenenery}
E_a(Q_a)=\frac{dp-(d+4)}{2d(p-1)}\|Q_a\|_{\dot{H}^1_a}^2=\frac{dp-(d+4)}{4(p+1)}\|Q_a\|_{L_x^{p+1}}^{p+1}.
\end{equation}
Moreover,
\begin{equation}\label{equ:caid}
C_a=\frac{\|Q_a\|_{L_x^{p+1}}^{p+1}}{\|Q_a\|_{L_x^2}^{\frac{d+2-(d-2)p}2}
\|Q_a\|_{\dot
H_a^1}^{\frac{d(p-1)}2}}=\Big(\frac{d+2-(d-2)p}{2(p+1)}\Big)^{-\frac{d+2-(d-2)p}4}
\Big(\frac{d(p-1)}{2(p+1)}\Big)^{-\frac{d(p-1)}4}\|Q_a\|_{L_x^{p+1}}^{\frac{(d-1)(p^2-1)}2}.
\end{equation}
%And,
%\begin{align}\label{equ:epsia}
%\mathcal{E}_a:=&M(Q_a)^{1-s_c}E_a(Q_a)^{s_c}\\\nonumber
%=&\Big(\frac{5-p}{2(p+1)}\Big)^{1-s_c}\Big(\frac{3p-7}{4(p+1)}\Big)^{s_c}\|Q_a\|_{L_x^{p+1}}^{p+1}\\\nonumber
%=&\Big(\frac{5-p}{2(p+1)}\Big)^{1-s_c}\Big(\frac{3p-7}{4(p+1)}\Big)^{s_c}C_a^{-\frac2{p-1}}\Big(\frac{5-p}{2(p+1)}\Big)^{-\frac{5-p}{2(p-1)}}
%\Big(\frac{3(p-1)}{2(p+1)}\Big)^{-\frac32}\\\nonumber
%=&C_a^{-\frac2{p-1}}\Big(\frac{3p-7}{4(p+1)}\Big)^{s_c}\Big(\frac{3(p-1)}{2(p+1)}\Big)^{-\frac32},
%\end{align}
%and
%\begin{align}\label{equ:kaid}
%\mathcal{K}_a:=&\|Q_a\|_{L_x^2}^{2(1-s_c)}\|Q_a\|_{\dot{H}^1_a}^{2s_c}\\\nonumber
%=&\Big(\frac{5-p}{2(p+1)}\Big)^{1-s_c}\Big(\frac{3(p-1)}{2(p+1)}\Big)^{s_c}\|Q_a\|_{L_x^{p+1}}^{p+1}\\\nonumber
%=&\Big(\frac{5-p}{2(p+1)}\Big)^{1-s_c}\Big(\frac{3(p-1)}{2(p+1)}\Big)^{s_c}C_a^{-\frac2{p-1}}\Big(\frac{5-p}{2(p+1)}\Big)^{-\frac{5-p}{2(p-1)}}
%\Big(\frac{3(p-1)}{2(p+1)}\Big)^{-\frac32}\\\nonumber
%=&C_a^{-\frac2{p-1}}\Big(\frac{3(p-1)}{2(p+1)}\Big)^{-\frac2{p-1}},
%\end{align}
and
\begin{equation}\label{equ:qah1}
C_a\|Q_a\|_{L^2}^{(1-s_c)(p-1)}\|Q_a\|_{\dot{H}^1_a}^{s_c(p-1)}=\frac{2(p+1)}{d(p-1)}.
\end{equation}

\end{lemma}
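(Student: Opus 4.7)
The plan is to derive the first two identities in \eqref{equ:identme} by testing the elliptic equation $-\la Q_a - Q_a + Q_a^p = 0$ against two natural multipliers, and then to read the remaining assertions off as algebraic consequences.

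First I would pair the equation with $Q_a$ itself. Self-adjointness of $\la$ gives the action-type identity
\[
\|Q_a\|_{\dot H_a^1}^2 + \|Q_a\|_{L^2}^2 = \|Q_a\|_{L^{p+1}}^{p+1}.
\]
Next I would pair with the scaling multiplier $x\cdot\nabla Q_a$. Lemma~\ref{Pohozaev} directly handles the $\Delta Q_a$, $Q_a$, and $Q_a^p$ pieces; for the Hardy potential term, one integration by parts gives
\[
\int_{\R^d}\frac{Q_a}{|x|^2}(x\cdot\nabla Q_a)\,dx = \tfrac12\int_{\R^d}\frac{x\cdot\nabla Q_a^2}{|x|^2}\,dx = -\tfrac{d-2}{2}\int_{\R^d}\frac{Q_a^2}{|x|^2}\,dx,
\]
which combines with the Laplacian contribution to rebuild $\|Q_a\|_{\dot H_a^1}^2$. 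The result is the Pohozaev identity
\[
\tfrac{d-2}{2}\|Q_a\|_{\dot H_a^1}^2 + \tfrac{d}{2}\|Q_a\|_{L^2}^2 = \tfrac{d}{p+1}\|Q_a\|_{L^{p+1}}^{p+1}.
\]

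Solving this $2\times 2$ linear system for $\|Q_a\|_{L^2}^2$ and $\|Q_a\|_{\dot H_a^1}^2$ in terms of $\|Q_a\|_{L^{p+1}}^{p+1}$ yields \eqref{equ:identme}. Feeding these expressions into $E_a(Q_a)=\tfrac12\|Q_a\|_{\dot H_a^1}^2-\tfrac{1}{p+1}\|Q_a\|_{L^{p+1}}^{p+1}$ produces \eqref{equ:idenenery}, and plugging them into $C_a=J_a(Q_a)$ from Theorem~\ref{T:GN} produces \eqref{equ:caid}; the key arithmetic here is that the two denominator exponents $(d+2-(d-2)p)/2$ and $d(p-1)/2$ sum to $2(p+1)$, which fixes the resulting power of $\|Q_a\|_{L^{p+1}}$.

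For \eqref{equ:qah1} I would use the exponent bookkeeping $(1-s_c)(p-1)=(d+2-(d-2)p)/2$ and $s_c(p-1)=d(p-1)/2-2$, both of which are immediate from $s_c=d/2-2/(p-1)$. Multiplying \eqref{equ:caid} by $\|Q_a\|_{L^2}^{(1-s_c)(p-1)}\|Q_a\|_{\dot H_a^1}^{s_c(p-1)}$ then cancels the $L^2$ factor outright and drops the $\dot H_a^1$ exponent from $d(p-1)/2$ down to $2$, leaving $\|Q_a\|_{L^{p+1}}^{p+1}/\|Q_a\|_{\dot H_a^1}^2$, which equals $2(p+1)/(d(p-1))$ by the second identity in \eqref{equ:identme}. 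The main (indeed only) subtle point is justifying the integration by parts above when $a<0$, since in that regime $Q_a$ may carry a $|x|^{-\sigma}$ singularity at the origin; however the constraint $a>-(\tfrac{d-2}{2})^2$ forces $\sigma<(d-2)/2$, which is precisely what makes $Q_a\in H_a^1$ and forces all boundary contributions at $|x|=\varepsilon$ to vanish as $\varepsilon\to 0$, so the formal computation is rigorous.
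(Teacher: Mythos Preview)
Your argument is correct and is exactly the ``simple computation'' the paper has in mind: it provides Lemma~\ref{Pohozaev} precisely so that pairing the elliptic equation against $Q_a$ and against $x\cdot\nabla Q_a$ yields the two linear relations you wrote, after which everything else is algebra. One small slip: the two denominator exponents $\tfrac{d+2-(d-2)p}{2}$ and $\tfrac{d(p-1)}{2}$ sum to $p+1$, not $2(p+1)$, though this does not affect your derivation of \eqref{equ:qah1}.
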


\begin{proposition}[Coercivity]\label{P:coercive} Fix $a>-\frac{(d-2)^2}4$. Let $u:I\times\R^d\to\C$
be the maximal-lifespan solution to \eqref{equ1.1} with $u(t_0)=u_0\in H_a^1\backslash\{0\}$ for some $t_0\in I$. Assume that
\begin{equation}\label{quant-below}
M(u_0)^{1-s_c}E_a(u_0)^{s_c} \leq (1-\delta)M(Q_a)^{1-s_c}E_a(Q_a)^{s_c}\qtq{for some}\delta>0.
\end{equation}
Then there exist $\delta'=\delta'(\delta)>0$, $c=c(\delta,a,\|u_0\|_{L_x^2})>0$, and $\eps=\eps(\delta)>0$ such that:
If $\|u_0\|_{L_x^2}^{1-s_c} \| u_0\|_{\dot H_a^1}^{s_c} \leq \|Q_a\|_{L_x^2}^{1-s_c} \|Q_a\|_{\dot H_a^1}^{s_c} $, then for all $t\in I$,
\begin{itemize}
\item[(i)] $\|u(t)\|_{L_x^2}^{1-s_c} \|u(t)\|_{\dot H_a^1}^{s_c} \leq (1-\delta')\|Q_a\|_{L_x^2}^{1-s_c} \|Q_a\|_{\dot H_a^1}^{s_c}$,
\item[(ii)] $\|u(t)\|_{\dot H_a^1}^2 - \tfrac{d(p-1)}{2(p+1)} \|u(t)\|_{L_x^{p+1}}^{p+1} \geq c\|u(t)\|_{\dot H_a^1}^2$
\item[(iii)] $(\tfrac{dp-(d+4)}{2d(p-1)}+\tfrac{2\delta'}{3(p-1)}) \|u(t)\|_{\dot H_a^1}^2 \leq E_a(u) \leq \tfrac12 \| u(t)\|_{\dot H_a^1}^2,$
\end{itemize}

\end{proposition}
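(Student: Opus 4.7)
The plan is to package both the hypothesis and the conclusions into a single scale-invariant quantity
\[
r(t) := \frac{\|u(t)\|_{L_x^2}^{1-s_c}\|u(t)\|_{\dot H_a^1}^{s_c}}{\|Q_a\|_{L_x^2}^{1-s_c}\|Q_a\|_{\dot H_a^1}^{s_c}},
\]
and to reduce the entire statement to the assertion that the sub-threshold hypothesis forces $r(t)$ into a fixed compact subset of $[0,1)$ throughout the lifespan $I$.

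The first step is to insert the sharp Gagliardo--Nirenberg bound \eqref{E:GN} into the definition of $E_a(u)$ and use the Pohozaev-type identity \eqref{equ:qah1}, which can be rewritten as $C_a\tilde X_Q^{p-1}=2(p+1)/[d(p-1)]$ with $\tilde X_Q:=\|Q_a\|_{L_x^2}^{1-s_c}\|Q_a\|_{\dot H_a^1}^{s_c}$, to obtain the pointwise-in-time bound $E_a(u(t))\geq \|u(t)\|_{\dot H_a^1}^{2}\,\psi(r(t))$, where $\psi(r):=\tfrac12-\tfrac{2r^{p-1}}{d(p-1)}$. Raising to the $s_c$-power, multiplying by $M(u(t))^{1-s_c}$, invoking mass and energy conservation, and using \eqref{equ:idenenery} to identify $E_a(Q_a)^{s_c}M(Q_a)^{1-s_c}=\psi(1)^{s_c}\tilde X_Q^{2}$, the sub-threshold hypothesis collapses to
\[
G(r(t))\leq (1-\delta)\,G(1),\qquad G(r):=r^{2}\psi(r)^{s_c}.
\]
A short calculation leaning on the algebraic identity $s_c(p-1)+2=d(p-1)/2$ shows that $G$ is strictly unimodal on $[0,r^\ast]$, where $r^\ast$ is the positive zero of $\psi$, with a unique maximum at $r=1$. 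The sub-level set is therefore $[0,r_1(\delta)]\cup [r_2(\delta),r^\ast]$ with $0<r_1<1<r_2<r^\ast$, and combining the initial assumption $r(0)\leq 1$ with continuity of $t\mapsto r(t)$ on $I$ rules out jumping the gap $(r_1,r_2)$; hence $r(t)\leq r_1<1$ on all of $I$.

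Part (i) is then immediate with $\delta_0:=1-r_1$. For (ii) I rerun \eqref{E:GN} together with \eqref{equ:qah1} to get
\[
\|u(t)\|_{\dot H_a^1}^{2}-\tfrac{d(p-1)}{2(p+1)}\|u(t)\|_{L_x^{p+1}}^{p+1}\geq \bigl(1-r(t)^{p-1}\bigr)\|u(t)\|_{\dot H_a^1}^{2}\geq c\,\|u(t)\|_{\dot H_a^1}^{2},
\]
with $c:=1-r_1^{p-1}>0$. For (iii), the upper bound is immediate from $\|u\|_{L_x^{p+1}}^{p+1}\geq 0$, while the lower bound comes from rewriting the first-step inequality as $E_a(u)\geq \|u\|_{\dot H_a^1}^{2}\bigl[\tfrac{dp-(d+4)}{2d(p-1)}+\tfrac{2(1-r(t)^{p-1})}{d(p-1)}\bigr]$ and then further shrinking $\delta'$ to satisfy $\delta'\leq 3c/d$. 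All three conclusions then hold simultaneously with $\delta':=\min(\delta_0,\,3c/d)$. The one genuinely content-bearing step is the unimodality of $G$ and the fact that its maximum sits exactly at $r=1$: this is where the identity \eqref{equ:qah1} is used essentially, pinning the threshold precisely to $Q_a$. Everything else is a standard interplay between continuity of the $H_a^1$-flow and sharp Gagliardo--Nirenberg.
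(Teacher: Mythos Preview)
Your proof is correct and follows essentially the same strategy as the paper: reduce via the sharp Gagliardo--Nirenberg inequality and the Pohozaev identities for $Q_a$ to a one-variable inequality in the ratio $r(t)$, then trap $r(t)$ strictly below $1$ by a continuity argument. The only cosmetic differences are that you work with $G(r)=r^{2}\psi(r)^{s_c}$ rather than its $1/s_c$-th root (as the paper does), and you prove (ii) directly from Gagliardo--Nirenberg rather than deducing it from (iii) via the algebraic identity $\|u\|_{\dot H_a^1}^{2}-\tfrac{d(p-1)}{2(p+1)}\|u\|_{L_x^{p+1}}^{p+1}=\tfrac{d(p-1)}{2}E_a(u)-\tfrac{dp-(d+4)}{4}\|u\|_{\dot H_a^1}^{2}$.
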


\begin{proof}   By the sharp Gagliardo--Nirenberg inequality, conservation of mass and energy, and \eqref{quant-below}, we may write
\begin{align*}
(1-\delta)M(Q_a)^{1-s_c}E_a(Q_a)^{s_c} \geq& M(u)^{1-s_c} E_a(u)^{s_c}\\
\geq& \|u(t)\|_{L_x^2}^{2(1-s_c)}\Big(\frac12\|u(t)\|_{\dot
H_a^1}^2-\frac1{p+1}C_a\|u(t)\|_{L_x^2}^{\frac{d+2-(d-2)p}2}
\|u(t)\|_{\dot H_a^1}^{\frac{d(p-1)}2}\Big)^{s_c}
\end{align*}
for any $t\in I$. Using \eqref{equ:idenenery} and \eqref{equ:qah1}, this inequality becomes
\[
(1-\delta)^\frac1{s_c}\geq
\frac{d(p-1)}{dp-(d+4)}\biggl(\frac{\|u(t)\|_{L_x^2}^{1-s_c}\|u(t)\|_{\dot
H_a^1}^{s_c}}{\|Q_a\|_{L_x^2}^{1-s_c} \|Q_a\|_{\dot
H_a^1}^{s_c}}\biggr)^\frac2{s_c} -
\frac2{dp-(d+4)}\biggl(\frac{\|u(t)\|_{L_x^2}^{1-s_c} \|u(t)\|_{\dot
H_a^1}^{s_c}}{\|Q_a\|_{L_x^2}^{1-s_c} \|Q_a\|_{\dot
H_a^1}^{s_c}}\biggr)^{\frac2{s_c}(p-1)}.
\]
Claims (i) now follow from a continuity argument, together with the observation that
\[
(1-\delta)^\frac1{s_c} \geq \frac{d(p-1)}{dp-(d+4)}y^\frac2{s_c} -
\frac2{dp-(d+4)}y^{\frac2{s_c}(p-1)} \implies |y-1|\geq \delta'
\qtq{for some} \delta'=\delta'(\delta)>0.
\]

For claim (iii), the upper bound follows immediately, since the nonlinearity is focusing.  For the lower bound, we again rely on the sharp Gagliardo--Nirenberg. Using (i)  and \eqref{equ:qah1} as well, we find
\begin{align*}
E_a(u) &  \geq \tfrac12\|u(t)\|_{\dot H_a^1}^2 [ 1 - \tfrac2{p+1} C_a \|u(t)\|_{L_x^2}^{(1-s_c)(p-1)} \|u(t) \|_{\dot H_a^1}^{s_c(p-1)} ] \\
& \geq  \tfrac12\|u(t)\|_{\dot H_a^1}^2 [ 1 - \tfrac4{d(p-1)}
(1-\delta')^{p-1}] \geq
(\tfrac{dp-(d+4)}{2d(p-1)}+\tfrac{2\delta'}{3(p-1)})\|u(t)\|_{\dot
H_a^1}^2
\end{align*}
for all $t\in I$. Thus (iii) holds.

We turn to (ii). We begin by writing
\begin{align*}
\|u(t)\|_{\dot H_a^1}^2 - \tfrac{d(p-1)}{2(p+1)} \|u(t)\|_{L_x^{p+1}}^{p+1}& = \tfrac{d(p-1)}2E_a(u) - \tfrac{dp-(d+4)}4 \|u(t)\|_{\dot H_a^1}^2, \\
(1+\eps)\|u(t)\|_{\dot H_a^1}^2 - \tfrac{d(p-1)}{2(p+1)}
\|u(t)\|_{L_x^{p+1}}^{p+1}&= \tfrac{d(p-1)}2E_a(u) -
(\tfrac{dp-(d+4)}4-\eps)\|u(t)\|_{\dot H_a^1}^2,
\end{align*}
for $t\in I$. Thus (ii) follows from (iii) by choosing any $0<c\leq \delta'$.

  \end{proof}

\begin{remark}\label{R:coercive} Suppose $u_0\in H_x^1\backslash\{0\}$ satisfies $M(u_0)^{1-s_c}E_a(u_0)^{s_c}<M(Q_a)^{1-s_c}E_a(Q_a)^{s_c}$ and $\|u_0\|_{L_x^2}^{1-s_c} \|u_0\|_{\dot H_a^1}^{s_c} \leq \|Q_a\|_{L_x^2}^{1-s_c} \|Q_a\|_{\dot H_a^1}^{s_c}.$ Then by continuity, the maximal-lifespan solution $u$ to \eqref{equ1.1} with initial data $u_0$ obeys $\|u(t)\|_{L_x^2}^{1-s_c} \|u(t)\|_{\dot H_a^1}^{s_c} < \|Q_a\|_{L_x^2}^{1-s_c} \|Q_a\|_{\dot H_a^1}^{s_c}$ for all $t$ in the lifespan of $u$.  In particular, $u$ remains bounded in $H_x^1$ and hence is global.
\end{remark}

%%%%%%%%%%%%%%%%%%%%%%%%%%%%%%%%%%%%%%%%%%%%%%%%%%%%%%%%%%%%%%%%%%%%%%%%%%%%%%%%%%%%%%%%%%%%%%%%%%%%%%%%%%%%%%%%%%%%%%%%%%%%%%%%%%%%%%%%%%

%%%%%%%%%%%%%%%%%%%%%%%%%%%%%%%%%%%%%%%%%%%%%%%%%%%%%%%%%%%%%%%%%%%%%%%%%%%%%%%%%%%%%%%%%%%%%%%%%%%%%%%%%%%%%%%%%%%%%%%%%%%%%%%%%%%%%%%%%%

\section{Proof of Theorem \ref{T:radial}}

In this section, we turn to prove Theorem \ref{T:radial}. Assume that $u$ is a solution to \eqref{equ1.1} satisfying the hypotheses of Theorem \ref{T:radial}. It follows from Remark \ref{R:coercive} that $u$ is global and satisfies the uniform bound
\begin{equation}\label{equ:uniformboun}
\|u_0\|_{L_x^2}^{1-s_c}\|u(t)\|_{\dot{H}^1_a}^{s_c}<(1-\delta')\|Q_a\|_{L_x^2}^{1-s_c}\|Q_a\|_{\dot{H}^1_a}^{s_c}.
\end{equation}

To show Theorem \ref{T:radial}, we first establish a scattering criterion by following the argument as in \cite{BM,Tao}.

\subsection{scattering criterion}

\begin{lemma}[Scattering criterion]\label{lem:scatcri}
Suppose $u:~\R\times\R^d\to\mathbb{C}$ is a radial solution to
\eqref{equ1.1} satisfying
\begin{equation}\label{equ:uhasuni}
\|u\|_{L_t^\infty(\R,H^1(\R^d))}\leq E.
\end{equation}
There exist $\epsilon=\epsilon(E)>0$ and $R=R(E)>0$ such that if
\begin{equation}\label{equ:masslim}
\liminf\limits_{t\to\infty}\int_{|x|<R}|u(t,x)|^2\;dx\leq\epsilon^2,
\end{equation}
then, $u$ scatters forward in time.
\end{lemma}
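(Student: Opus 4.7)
The plan is to reduce the question to the small-data scattering criterion of Theorem~\ref{T:LWP}(ii): with $q_0 := \tfrac{(d+2)(p-1)}{2}$, it suffices to produce a time $T$ such that
$$\|e^{-i(t-T)\la}u(T)\|_{L^{q_0}_{t,x}((T,\infty)\times\R^d)} < \eta_0.$$
Using the $\liminf$ hypothesis, first fix $T$ large enough that $\|u(T)\|_{L^2(B_R)}^2 \leq 4\epsilon^2$; the parameters $R = R(E)$ and $\epsilon = \epsilon(E)$ will be tuned only at the end.

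Next, apply the Duhamel identity
$$e^{-i(t-T)\la}u(T) = e^{-it\la}u_0 + i\int_0^T e^{-i(t-s)\la}\bigl(|u|^{p-1}u\bigr)(s)\,ds$$
and estimate the two terms separately. The free term $e^{-it\la}u_0$ lies in $L^{q_0}_{t,x}(\R\times\R^d)$: this follows by combining the Strichartz estimate at an admissible pair $(q_0, r')$ with the Sobolev embedding $\dot W^{s_c, r'}_x \hookrightarrow L^{q_0}_x$, using $u_0 \in H^1 \hookrightarrow \dot H^{s_c}$ with $0 < s_c < 1$ under \eqref{equ:acond}. Hence its norm over $(T,\infty)\times\R^d$ is smaller than $\eta_0/3$ once $T$ is large, by dominated convergence.

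For the Duhamel integral, split at a threshold $T_0 := T - T^*$, writing $\int_0^T = \int_0^{T_0} + \int_{T_0}^T$. On the far-past piece $[0, T_0]$, every $t > T$ is separated from $s$ by at least $T^*$, so the radial dispersive estimate of Theorem~\ref{thm:disp} provides decay in $|t-s|$; interpolating with the $L^2_x$ isometry of $e^{-it\la}$ and bounding $|u(s)|^{p-1}u(s)$ in $L^{q_0'}_x$ (or its weighted analogue when $a<0$) via the uniform $H^1$ bound and Lemma~\ref{lem:sobemb} yields an overall factor of $O((T^*)^{-\alpha})$ for some $\alpha>0$, which is $< \eta_0/3$ once $T^*$ is chosen large. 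On the recent-past piece $[T_0, T]$, dispersive decay is unavailable, and the small-mass information must be exploited directly. Local Strichartz estimates on $[T_0, T]$ (whose constants depend only on $E$ and $T^*$) let me replace $u(s)$ by $u(T)$ modulo $L^\infty_t L^2_x$-errors that vanish with $|s - T|$; then I decompose $u(T) = \chi_{B_R} u(T) + \chi_{B_R^c} u(T)$. The inner piece has $L^2$-norm $\leq 2\epsilon$ by hypothesis, hence is small in the dual Strichartz norm via H\"older and Sobolev embedding, while the outer piece satisfies the pointwise bound $|u(T, x)| \leq CE|x|^{-(d-1)/2}$ by Lemma~\ref{lem:sobemb}, making $|u(T)|^{p-1}u(T)$ integrable and small in the dual Strichartz norm over $\{|x| > R\}$ provided $R$ is large and $p > 2d/(d-1)$ (a constraint consistent with \eqref{equ:acond}). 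Feeding these into the Strichartz inequality closes the final $\eta_0/3$.

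The principal obstacle is the case $a<0$, where Theorem~\ref{thm:disp} only provides a weighted dispersive estimate with weights $(1 + |x|^{-\sigma})^{\pm 1}$. Propagating these weights through the decomposition and balancing them against the radial Sobolev decay $|x|^{-(d-1)/2}$ forces a delicate choice of spatial cutoffs and Lebesgue exponents, and accounts for the restriction \eqref{equ:acond}. The parameters will be calibrated in the order: $R$ large, then $\epsilon$ small, then $T^*$ large, and finally $T$ large.
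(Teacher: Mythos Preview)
Your global architecture—reduce to smallness of $e^{-i(t-T)\la}u(T)$ in a scattering norm, Duhamel, split the integral into a far-past piece handled by dispersive decay and a recent-past piece handled by the small-mass hypothesis—matches the paper. The gap is in the recent-past piece $[T_0,T]$.

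You write that local Strichartz estimates ``let me replace $u(s)$ by $u(T)$ modulo $L^\infty_t L^2_x$-errors that vanish with $|s-T|$'', and then you analyze only the single snapshot $u(T)$. But $|s-T|$ ranges up to $T^*$, which you yourself take large to gain decay on the far-past piece, so these errors are not small; no continuity or Strichartz argument depending only on $E$ and $T^*$ will make $u(s)-u(T)$ small uniformly on $[T-T^*,T]$. The paper uses a different mechanism: the identity $\partial_t|u|^2=-2\nabla\cdot\Im(\bar u\,\nabla u)$ gives
\[
\Bigl|\partial_t\int\chi_R(x)|u(t,x)|^2\,dx\Bigr|\lesssim \frac{1}{R},
\]
so the smallness $\int\chi_R|u(T)|^2\lesssim\epsilon^2$ propagates to the whole interval $I_2=[T-\epsilon^{-\theta},T]$ provided $R\gg\epsilon^{-2-\theta}$. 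From $\|\chi_R u\|_{L^\infty_t L^2_x(I_2)}\lesssim\epsilon$ together with radial Sobolev outside $B_R$ one then bounds $\|u\|_{L^{2(d+2)/d}_{t,x}(I_2)}$ by a positive power of $\epsilon$, and this feeds into a dual Strichartz estimate for the recent-past Duhamel term. Note that this forces a coupling among the parameters: the recent-past length is $T^*=\epsilon^{-\theta}$ and $R\gg\epsilon^{-2-\theta}$, in direct contradiction to your stated calibration order ``$R$ large, then $\epsilon$ small, then $T^*$ large''. With $T^*$ chosen independently large, the Strichartz constants on $I_2$ grow like a power of $T^*$ and cannot be beaten by the fixed $\epsilon$; conversely, with $R$ fixed before $\epsilon$, the localized-mass drift over $I_2$ swamps $\epsilon^2$. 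Your outline will work once you insert the localized-mass almost-conservation step and re-order the parameter choices accordingly.
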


\begin{proof}
First, by interpolation with $\|u\|_{L_t^\infty(\R,H^1(\R^d))}$, we
only need to show that
\begin{equation}\label{equ:reduce}
\|u\|_{L_t^{4}([0,\infty),L_x^{r}(\R^d))}<+\infty,
\end{equation}
with $r=\tfrac{2d}{d-2}$. By H\"older's inequality, Sobolev
embedding and \eqref{equ:uhasuni}, we have for any finite interval
$I$,
\begin{equation*}
\|u\|_{L_t^{4}(I,L_x^{r})}\leq C|I|^\frac14\|u\|_{L_t^\infty
\dot{H}^1}\leq C|I|^\frac14.
\end{equation*}
 Thus, we are
reduced to show for some $T>0$
\begin{equation}\label{equ:redfur}
\|u\|_{L_t^{4}([T,\infty),L_x^{r}(\R^d))}<+\infty.
\end{equation}
By  continuity argument, Strichartz estimate and Sobolev embedding, we are further reduced to show
\begin{equation}\label{equ:redfur}
\|e^{i(t-T)\la}u(T)\|_{L_t^{4}([T,\infty),L_x^{r}(\R^d))}\ll1.
\end{equation}

Now, let $0<\epsilon<1$ and $R\geq1$ to be determined later. Using
Duhamel formula, we can write
\begin{equation}\label{equ:duhamp}
e^{i(t-T)\la}u(T)=e^{it\la}u_0+F_1(t)+F_2(t),
\end{equation}
where
$$F_j(t)=i\int_{I_j}e^{i(t-s)\la}(|u|^{p-1}u)(s)\;ds,~I_1=[0,T-\epsilon^{-\theta}],~I_2=[T-\epsilon^{-\theta},T],$$
where $0<\theta<1$ to be determined later. Using Sobolev embedding,
Strichartz estimate, we can pick $T_0$ sufficiently large such that
\begin{equation}\label{equ:smallline}
\big\|e^{it\la}u_0\big\|_{L_t^{4}([T_0,\infty),L_x^{r})}<\epsilon.
\end{equation}

{\bf Estimate the term $F_1(t)$:} We can rewrite $F_1(t)$ as
\begin{equation}\label{equ:f2rw}
F_1(t)=e^{i(t-T+\epsilon^{-\theta})\la}\big[u(T-\epsilon^{-\theta})\big]-e^{it\la}u_0.
\end{equation}
Using Strichartz estimate, we have
\begin{equation}\label{equ:l4l6est}
\|F_1\|_{L_t^2([T,\infty),L_x^r)}\lesssim1.
\end{equation}
On the other hand, by Lemma \ref{thm:disp}, we get for $p\geq2$
\begin{align}\nonumber
\|F_1(t)\|_{L_x^r(|x|\leq
R_1)}\lesssim&\int_{I_1}\big\|e^{i(t-s)\la}(|u|^{p-1}u)(s)\big\|_{L_x^r(|x|\leq
R_1)}\;ds\\\nonumber
\lesssim&\int_{I_1}\big\|(1+|x|^{-\alpha_1})^{-1}e^{i(t-s)\la}(|u|^{p-1}u)(s)\big\|_{L_x^\infty}\;ds
\cdot\|(1+|x|^{-\alpha_1})\|_{L_x^\frac{2d}{d-2}(|x|\leq
R_1)}\\\nonumber
\lesssim&R_1^\frac{d-2}2\int_{I_1}|t-s|^{-\frac{d}2+\alpha_1}\big\|(1+|x|^{-\alpha_1})(|u|^{p-1}u)\big\|_{L_x^1}\;ds\\\nonumber
\lesssim&R_1^\frac{d-2}2|t-T+\epsilon^{-\theta}|^{-\frac{d-2}2+\alpha_1}\\\label{equ:pgeq2}
\lesssim&R_1^\frac{d-2}2\epsilon^{(\frac{d-2}2-\alpha_1)\theta},\quad
t>T,
\end{align}
where
\begin{equation}\label{equ:alpha1}
\alpha_1=\begin{cases} 0\quad \text{if}\quad a\geq0\\
\sigma\quad\text{if}\quad a<0,
\end{cases}
\end{equation}
and we have used the estimate for $a\geq0$
$$\big\||u|^{p-1}u\big\|_{L^1_x}\leq\|u\|_{L_x^p}^p\leq
C\|u\|_{L_t^\infty H^1_x}<+\infty,$$ while for $a<0$
\begin{align*}
\big\||x|^{-\sigma}|u|^{p-1}u\big\|_{L^1_x}\lesssim\big\||x|^{-\sigma}u\big\|_{L_x^2}\|u\|_{L_x^{2(p-1)}}^{p-1}
\lesssim\big\||\nabla|^\sigma
u\big\|_{L_x^2}\|u\|_{H^1_x}^{p-1}\lesssim\|u\|_{H^1_x}^p<+\infty,
\end{align*}
since $\sigma<1$ by the assumption \eqref{equ:acond}. When $p<2$, we
have
\begin{align}\nonumber
\|F_1(t)\|_{L_x^r(|x|\leq
R_1)}\lesssim&\int_{I_2}\big\|e^{i(t-s)\la}(|u|^{p-1}u)(s)\big\|_{L_x^r(|x|\leq
R_1)}\;ds\\\nonumber
\lesssim&\int_{I_2}\big\|(1+|x|^{-\alpha_1})^{-(p-1)}e^{i(t-s)\la}(|u|^{p-1}u)(s)\big\|_{L_x^\frac2{2-p}}\;ds\\\nonumber
&\times\|(1+|x|^{-\alpha_1})^{p-1}\|_{L_x^\frac{2d}{pd-(d+2)}(|x|\leq
R_1)}\\\nonumber
\lesssim&R_1^\frac{pd-(d+2)}2\int_{I_2}|t-s|^{-(\frac{d}2+\alpha_1)(p-1)}\big\|(1+|x|^{-\alpha_1})^{p-1}(|u|^{p-1}u)\big\|_{L_x^\frac2p}\;ds\\\nonumber
\lesssim&R_1^\frac{pd-(d+2)}2|t-T+\epsilon^{-\theta}|^{-(\frac{d}2+\alpha_1)(p-1)+1}\\\label{equ:pleq2}
\lesssim&R_1^\frac{pd-(d+2)}2\epsilon^{(\frac{d}2+\alpha_1)(p-1)\theta-\theta},\quad
t>T,
\end{align}
where $\alpha_1$ is as in \eqref{equ:alpha1} and we have used the
estimate for $a\geq0$
$$\big\||u|^{p-1}u\big\|_{L_x^\frac2p}\leq\|u\|_{L_x^2}^p<+\infty$$
and for $a<0$
\begin{align*}
\big\||x|^{-(p-1)\sigma}|u|^{p-1}u\big\|_{L_x^\frac2p}\lesssim\big\||x|^{-\sigma}u\big\|_{L_x^2}^{p-1}\|u\|_{L_x^2}\lesssim\|u\|_{L_t^\infty
H^1_x}^p<+\infty.
\end{align*}

Using \eqref{equ:f2rw}, Lemma \ref{lem:sobemb}, we obtain
\begin{align*}
\|F_1(t)\|_{L_x^r(|x|\geq
R_1)}\lesssim\|F_1\|_{L_x^2}^\frac{d-2}d\|F_1\|_{L_x^\infty(|x|\geq
R_1)}^\frac2d \lesssim R_1^{-\frac{d-1}d}.
\end{align*}
Therefore, by taking
$R_1^{\frac{d-2}2+\frac{d-1}d}=\epsilon^{-(\frac{d-2}2-\alpha_1)\theta}$
for $p\geq2$, and
$R_1^{\frac{pd-(d+2)}2+\frac{d-1}d}=\epsilon^{-(\frac{d}2+\alpha_1)(p-1)\theta+\theta}$
for $p<2$, we get by H\"older's inequality and \eqref{equ:l4l6est}
\begin{align}\label{equ:f1est}
\|F_1\|_{L_t^4([T,\infty),L_x^r)}\lesssim\|F_1\|_{L_t^\infty([T,\infty),L_x^r)}^\frac12
\|F_1\|_{L_t^2([T,\infty),L_x^r)}^\frac12 \lesssim\epsilon^\beta,
\end{align}
where
\begin{equation*}
  \beta=\begin{cases}
  \frac{2(d-1)}{d^2-2}(\frac{d-2}2-\alpha_1)\theta\quad\text{if}\quad
  p\geq2\\
  \frac{2(d-1)}{pd+d-4}(1-(\frac{d}2+\alpha_1)(p-1))\theta\quad\text{if}\quad p<2.
  \end{cases}
\end{equation*}

{\bf Estimate the term $F_2(t)$.} First, by \eqref{equ:masslim}, we
may choose $T>T_0$
\begin{equation}\label{equ:assumest}
\int \chi_R(x)|u(T,x)|^2\;dx\leq \epsilon^2,
\end{equation}
where $\chi_R(x)\in C_c^\infty(\R^d)$ and
\begin{equation*}
\chi_R(x)=\begin{cases}
1\quad \text{if}\quad |x|\leq R,\\
0\quad \text{if}\quad |x|\geq 2R.
\end{cases}
\end{equation*}
On the other hand, combining the identity $\pa_t|u|^2=-2\nabla\cdot{\rm Im}(\bar{u}\nabla u)$ and integration by parts, H\"older's inequality, we obtain
\begin{equation*}
\Big|\pa_t\int \chi_R(x)|u(t,x)|^2\;dx\Big|\lesssim\frac1R.
\end{equation*}
Hence, choosing $R\gg\epsilon^{-2-\theta}$, we get by
\eqref{equ:assumest}
\begin{equation}\label{equ:smallcut}
\|\chi_Ru\|_{L_t^\infty L_x^2(I_2\times\R^d)}\lesssim \epsilon.
\end{equation}
And so, by H\"older's inequality, Sobolev embedding and   Lemma
\ref{lem:sobemb}, we have for $q=\frac{2(d+2)}{d}$
\begin{align*}
\|u\|_{L_{t,x}^{q}(I_2\times\R^d)}\lesssim&\epsilon^{-\frac{\theta}{q}}\|u\|_{L_t^\infty(I_2,L_x^{q})}\\
\lesssim&\epsilon^{-\frac{\theta}{q}}\Big(\|\chi_Ru\|_{L_t^\infty(I_2,L_x^2)}^\frac2{d+2}\|u\|_{L_t^\infty
L_x^\frac{2d}{d-2}}^\frac{d}{d+2}
+\big\|(1-\chi_R)u\big\|_{L_{t,x}^\infty}^\frac2{d+2}\|u\|_{L_t^\infty L_x^2}^\frac{d}{d+2}\Big)\\
\lesssim&\epsilon^{-\frac{\theta}q}\big(\epsilon^\frac2{d+2}+R^{-\frac{d-2}{d+2}}\big)\lesssim
\epsilon^{\frac2{d+2}-\frac{\theta}q}.
\end{align*}
On the other hand, using Strichartz estimate and continuous
argument, we have
$$\|\la^\frac14u\|_{L_t^\frac{2(d+2)}{d-2}(I_2,L^\frac{2d(d+2)}{d^2+4})}^\frac{2(d+2)}{d-2}+\|u\|_{L_{t,x}^\frac{2(d+2)}{d-2}(I_2\times\R^d)}^\frac{2(d+2)}
{d-2}\lesssim 1+|I_2|.$$
 Thus, we use Sobolev embedding, Strichartz
estimate, equivalence of Sobolev spaces (Lemma
\ref{pro:equivsobolev}) to get
\begin{align}\nonumber
\|F_2\|_{L_t^{4}([T,\infty),L_x^{r})}\lesssim&\big\||\nabla|^\frac12F_2\big\|_{L_t^4((T,\infty),L_x^\frac{2d}{d-1})}\\\nonumber
\lesssim&\big\|\la^{\frac14}(|u|^{p-1}u)\big\|_{L_t^2(I_2,L_x^\frac{2d}{d+2})}\\\nonumber
\lesssim&\|u\|_{L_{t,x}^{\frac{d+2}2(p-1)}(I_2\times\R^d)}^{p-1}\|\la^\frac14u\|_{L_t^\frac{2(d+2)}{d-2}(I_2,L^\frac{2d(d+2)}{d^2+4})}\\\nonumber
\lesssim&|I_2|^\frac{d-2}{2(d+2)}\Big(\|u\|_{L_{t,x}^q(I_2\times\R^d)}^{1-s_c}
\|u\|_{L_{t,x}^\frac{2(d+2)}{d-2}(I_2\times\R^d)}^{s_c}\Big)^{p-1}\\\nonumber
\lesssim&|I_2|^{\frac{d-2}{2(d+2)}+\frac{d-2}{2(d+2)}s_c(p-1)}\epsilon^{\frac{4-\theta
d}{2(d+2)}(1-s_c)(p-1)}\\\nonumber \lesssim&
\epsilon^{\frac{4-\theta
d}{2(d+2)}(1-s_c)(p-1)-\frac{d-2}{2(d+2)}(1+s_c(p-1))\theta}\\\label{equ:plar}
\lesssim&\epsilon^{\frac{4-\theta d}{4(d+2)}(1-s_c)(p-1)}
\end{align}
by taking $\frac{4-\theta
d}{4(d+2)}(1-s_c)(p-1)=\frac{d-2}{2(d+2)}(1+s_c(p-1))\theta$. This
together with \eqref{equ:duhamp}, \eqref{equ:smallline}, and
\eqref{equ:f1est}  yields that
\begin{equation*}
\|e^{i(t-T)\la}u(T)\|_{L_t^4([T,\infty),L_x^\frac{2d}{d-2}(\R^d))}\lesssim
\epsilon+\epsilon^\beta+\epsilon^{\frac{4-\theta
d}{4(d+2)}(1-s_c)(p-1)}.
\end{equation*}
And so \eqref{equ:redfur} follows. Therefore, we conclude the proof
of Lemma \ref{lem:scatcri}.

\end{proof}

\subsection{Virial identities}\label{S:virial}  In this section, we recall some standard virial-type identities.
 Given a weight $w:\R^d\to\R$ and a solution $u$ to \eqref{equ1.1}, we define
\[
V(t;w) := \int |u(t,x)|^2 w(x)\,dx.
\]
Using \eqref{equ1.1}, one finds
\begin{align}\label{virial}
& \partial_t V(t;w) = \int 2\Im \bar u \nabla u \cdot \nabla w \,dx, \\\nonumber
& \partial_{tt}V(t;w) = \int (-\Delta\Delta w)|u|^2 + 4\Re \bar u_j u_k w_{jk} + 4|u|^2 \tfrac{ax}{|x|^4}\cdot \nabla w - \tfrac{2(p-1)}{p+1}|u|^{p+1} \Delta w\,dx.
\end{align}

The standard virial identity makes use of $w(x)=|x|^2$.

\begin{lemma}[Standard virial identity]\label{L:virial0} Let $u$ be a solution to \eqref{equ1.1}.  Then
\[
\partial_{tt} V(t;|x|^2) = 8\Bigl[\|u(t)\|_{\dot H_a^1}^2 - \tfrac{d(p-1)}{2(p+1)}\|u(t)\|_{L_x^{p+1}}^{p+1}\Bigr].
\]
\end{lemma}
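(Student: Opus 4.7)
The plan is to apply the general virial formula stated just above the lemma with the specific choice $w(x)=|x|^2$ and compute each of the four terms directly. This is essentially a bookkeeping exercise, so the only subtlety is to make sure the inverse-square contribution combines correctly with the kinetic term to form the $\dot H^1_a$ norm.

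First I would record the derivatives of $w(x)=|x|^2$: $\nabla w = 2x$, $w_{jk} = 2\delta_{jk}$, $\Delta w = 2d$, and $\Delta\Delta w = 0$. In particular the biharmonic term $\int(-\Delta\Delta w)|u|^2\,dx$ drops out. Plugging $w_{jk}=2\delta_{jk}$ into the Hessian term gives
\[
\int 4\Re \bar u_j u_k w_{jk}\,dx = 8\int |\nabla u|^2\,dx.
\]
Next, using $\nabla w = 2x$, the potential term becomes
\[
\int 4|u|^2\,\tfrac{ax}{|x|^4}\cdot \nabla w\,dx = \int \tfrac{8a}{|x|^2}|u|^2\,dx,
\]
so that combined with the kinetic piece we obtain exactly $8\|u(t)\|_{\dot H_a^1}^2$.

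Finally, with $\Delta w = 2d$ the nonlinear term contributes
\[
-\int \tfrac{2(p-1)}{p+1}|u|^{p+1}\Delta w\,dx = -\tfrac{4d(p-1)}{p+1}\|u(t)\|_{L_x^{p+1}}^{p+1}.
\]
Adding these three contributions and factoring out $8$ yields the claimed identity. The only issue worth being mindful of is a justification issue rather than a computational one: the formal manipulations leading to \eqref{virial} require enough decay/regularity of $u$ for the boundary terms in the integrations by parts to vanish, which is standard for $H^1_a$ solutions by an approximation argument (truncate $|x|^2$ to $\chi_R(x)|x|^2$, apply the identity, and pass to the limit as $R\to\infty$ using the $H^1$ bound); since the lemma is stated for general solutions to \eqref{equ1.1}, one typically interprets the identity in the distributional sense in $t$ or invokes a standard approximation lemma. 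No serious obstacle appears.
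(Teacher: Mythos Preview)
Your proposal is correct and follows exactly the route the paper takes: the paper does not give a separate proof of this lemma but simply records the general second-variation formula \eqref{virial} and specializes it to $w(x)=|x|^2$, which is precisely what you do. Your term-by-term computation is accurate, and your remark about the justification via truncation is the standard caveat.
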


In general, we do not work with solutions for which $V(t;|x|^2)$ is finite. Thus, we need a truncated version of the virial identity (cf. \cite{OT}, for example). For $R>1$, we define $w_R(x)$ to be a smooth, non-negative radial function satisfying
\begin{equation}\label{phi-virial}
w_R(x)=\begin{cases} |x|^2 & |x|\leq \frac{R}2 \\ R|x| & |x|>R,\end{cases}
\end{equation}
with
\begin{equation}\label{equ:wr}
\pa_rw_R\geq0,~\pa_r^2w_R\geq0,~|\pa^\alpha w_R(x)|\lesssim_\alpha R|x|^{-|\alpha|+1},~|\alpha|\geq1.
\end{equation}

In this case, we use \eqref{virial} to deduce the following:

\begin{lemma}[Truncated virial identity]\label{L:virial} Let $u$ be a radial solution to \eqref{equ1.1} and let $R>1$. Then
\begin{align*}
\nonumber \partial_{tt}& V(t;w_R)\\
  &= 8\int_{|x|\leq\frac{R}2}\Bigl[|\nabla u(t)|^2+a\tfrac{|u|^2}{|x|^2} - \tfrac{d(p-1)}{2(p+1)} |u(t)|^{p+1}\Bigr]\;dx  \\
& \quad  + \int_{|x|>R} \Bigl[4aR\tfrac{|u|^2}{|x|^3}-\tfrac{2(d-1)(p-1)}{p+1}\tfrac{R}{|x|}|u|^{p+1}+\frac{4R}{|x|}(|\nabla u|^2-|\pa_ru|^2)\Bigr]\;dx \\
&\quad  +\int_{\frac{R}2\leq|x|\leq R}\Bigl[4{\rm Re}\pa_{jk}w_R\bar{u}_j\pa_ku+O\bigl(  \tfrac{R}{|x|}|u|^{p+1} + \tfrac{R}{|x|^3}|u|^2\bigr)\Bigr]\;dx.
\end{align*}
Furthermore, by \eqref{equ:wr}, we have that $$\int_{\frac{R}2\leq|x|\leq R}\Bigl[4{\rm Re}\pa_{jk}w_R\bar{u}_j\pa_ku\Bigr]\;dx\geq 0.$$
\end{lemma}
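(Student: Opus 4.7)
The plan is to apply the general second-order virial formula \eqref{virial} with $w = w_R$ and split the integral according to the three zones in the piecewise definition of $w_R$: the inner ball $\{|x| \leq R/2\}$, where $w_R = |x|^2$; the outer region $\{|x| \geq R\}$, where $w_R = R|x|$; and the transition annulus $\{R/2 \leq |x| \leq R\}$. In the inner region the computation is identical to that of Lemma~\ref{L:virial0}: one has $\nabla w_R = 2x$, $\pa_{jk} w_R = 2\delta_{jk}$, $\Delta w_R = 2d$, and $\Delta^2 w_R = 0$, which produces precisely the first bracket of the claimed identity.

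In the outer region, $w_R = R|x|$ is radial with $w_R'(r) = R$ and $w_R''(r) = 0$, so a direct computation gives
\[
\nabla w_R = R\tfrac{x}{|x|},\qquad \Delta w_R = \tfrac{(d-1)R}{|x|}, \qquad \pa_{jk}w_R = \tfrac{R}{|x|}\Bigl(\delta_{jk} - \tfrac{x_j x_k}{|x|^2}\Bigr).
\]
Consequently $4\Re\bar u_j u_k \pa_{jk}w_R = \tfrac{4R}{|x|}(|\nabla u|^2 - |\pa_r u|^2)$, the inverse-square term becomes $4aR|u|^2/|x|^3$, and the nonlinear term becomes $-\tfrac{2(d-1)(p-1)}{p+1}\tfrac{R}{|x|}|u|^{p+1}$. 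The biharmonic contribution $-\Delta^2 w_R = (d-1)(d-3)R/|x|^3$ is non-negative for $d\geq 3$ and will be dropped (only a lower bound is needed in the application). This assembles the second bracket.

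For the transition annulus, one keeps $4\Re \pa_{jk}w_R \bar u_j \pa_k u$ intact and uses the derivative bounds \eqref{equ:wr}: $|\nabla w_R| \lesssim R$ makes the inverse-square term of size $O(R|u|^2/|x|^3)$, $|\Delta w_R| \lesssim R/|x|$ turns the nonlinear term into $O((R/|x|)|u|^{p+1})$, and $|\Delta^2 w_R| \lesssim R/|x|^3$ contributes another $O(R|u|^2/|x|^3)$. Summing the three regions gives the stated identity.

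The closing non-negativity claim uses the radial symmetry of $u$. Writing $u = u(r)$ yields $\pa_j u = u'(r)x_j/|x|$, whence $\bar u_j \pa_k u = |u'(r)|^2 x_j x_k/|x|^2$. Combined with the radial Hessian formula
\[
\pa_{jk} w_R = w_R''(r)\tfrac{x_j x_k}{|x|^2} + \tfrac{w_R'(r)}{|x|}\Bigl(\delta_{jk} - \tfrac{x_j x_k}{|x|^2}\Bigr),
\]
the contraction collapses to $\Re \pa_{jk}w_R \bar u_j \pa_k u = w_R''(r)|u'(r)|^2 \geq 0$, by the convexity $\pa_r^2 w_R \geq 0$ from \eqref{equ:wr}. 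The only genuine subtlety in the argument is the bookkeeping in the transition annulus — one must carefully track which bound from \eqref{equ:wr} produces each of the two $O$ terms — while everything else is routine differentiation of the piecewise-defined weight.
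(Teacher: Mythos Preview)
Your proposal is correct and matches the paper's intended argument: the paper offers no proof beyond the sentence ``In this case, we use \eqref{virial} to deduce the following,'' and your zone-by-zone substitution of the derivatives of $w_R$ into \eqref{virial} is exactly the computation that sentence implies. Your remark that the biharmonic contribution $(d-1)(d-3)R|u|^2/|x|^3$ in $\{|x|>R\}$ is non-negative and omitted from the displayed identity is an accurate observation (for $d>3$ the lemma is really a lower bound, which is all that is used in Proposition~\ref{prop:mores}); and your radial computation reducing the annular Hessian term to $w_R''(r)|u'(r)|^2\geq 0$ is precisely the content of the paper's appeal to \eqref{equ:wr}.
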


\subsection{Proof of Theorem \ref{T:radial}}

By the scattering criterion (Lemma \ref{lem:scatcri}) and H\"older's inequality, Theorem \ref{T:radial} follows from the following lemma.

\begin{lemma}\label{lem:poste}
There exists a sequence of times $t_n\to\infty$ and a sequence of radii $R_n\to\infty$ such that
\begin{equation}\label{equ:pot}
\lim_{n\to\infty}\int_{|x|\leq R_n}|u(t_n,x)|^{p+1}\;dx=0.
\end{equation}
\end{lemma}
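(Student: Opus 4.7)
My plan is to run a truncated virial argument over long time windows and then extract the desired sequences via a pigeonhole. Specifically, I will establish an inequality of the form
\begin{equation*}
\partial_{tt}V(t;w_R)\geq c\int_{|x|\leq R/4}|u(t,x)|^{p+1}\,dx-\eta(R),
\end{equation*}
for some $c>0$ and some $\eta(R)\to 0$ uniformly in $t$. Combined with the bound $|\partial_tV(t;w_R)|\lesssim R$ (from \eqref{virial}, $|\nabla w_R|\lesssim R$, and \eqref{equ:uniformboun}), integration over $[T_0,T_0+T]$ yields
\begin{equation*}
\tfrac{1}{T}\int_{T_0}^{T_0+T}\int_{|x|\leq R/4}|u(t,x)|^{p+1}\,dx\,dt\lesssim \tfrac{R}{T}+\eta(R).
\end{equation*}
A diagonal choice $R=R_n\to\infty$ slowly enough that $\eta(R_n)\to 0$, then $T_n=R_n^2$ and $T_0=n$, together with a pigeonhole on $[n,n+T_n]$, produces $t_n\to\infty$ along which \eqref{equ:pot} holds with radii $R_n/4\to\infty$.

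The exterior and annulus contributions in Lemma~\ref{L:virial} are controlled by the radial pointwise bound $|u(t,x)|\lesssim|x|^{-(d-1)/2}$ from Lemma~\ref{lem:sobemb} with $s=(d-1)/2$: the key estimates
\begin{equation*}
\int_{|x|>R/2}\tfrac{R}{|x|^3}|u|^2\,dx\lesssim R^{-1},\qquad\int_{|x|>R/2}\tfrac{R}{|x|}|u|^{p+1}\,dx\lesssim R^{-(d-1)(p-1)/2}
\end{equation*}
are both $o_R(1)$ uniformly in $t$; the non-negative Hessian term on the annulus is discarded. A crucial feature of the radial setting is that the exterior term $4R|x|^{-1}(|\nabla u|^2-|\partial_ru|^2)$ in Lemma~\ref{L:virial} vanishes identically, so no uncontrollable $\int_{|x|>R/2}|\nabla u|^2$ appears in $\partial_{tt}V(t;w_R)$.

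The main obstacle is therefore to lower bound the interior integral $8\int_{|x|\leq R/2}[|\nabla u|^2+a|u|^2/|x|^2-\tfrac{d(p-1)}{2(p+1)}|u|^{p+1}]\,dx$ by a multiple of $\int_{|x|\leq R/4}|u|^{p+1}\,dx$, up to $o_R(1)$. The variational bound from Proposition~\ref{P:coercive}(ii), upgraded via Gagliardo--Nirenberg to $\|u\|_{\dot H_a^1}^2-\tfrac{d(p-1)}{2(p+1)}\|u\|_{L^{p+1}}^{p+1}\geq c_0\|u\|_{L^{p+1}}^{p+1}$ (using $\|u\|_{L^{p+1}}^{p+1}\lesssim\|u\|_{\dot H_a^1}^2$), only controls the \emph{global} quantity. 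To localize I would apply this strengthened coercivity to $\chi_Ru$, where $\chi_R$ is a smooth radial cutoff equal to $1$ on $|x|\leq R/4$, supported in $\{|x|\leq R/2\}$, with $|\nabla^k\chi_R|\lesssim R^{-k}$. That $\chi_Ru$ remains sub-threshold for $R$ large follows from the continuity argument of Remark~\ref{R:coercive}, using $\|\chi_Ru\|_{L^2}\leq\|u\|_{L^2}$ and the integration by parts identity $\|\chi_Ru\|_{\dot H_a^1}^2=\int \chi_R^2[|\nabla u|^2+a|u|^2/|x|^2]\,dx+O(R^{-2})$, the RHS being dominated by $\|u\|_{\dot H_a^1}^2+O(R^{-2})$ (with Hardy absorbing the $a<0$ contribution). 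The resulting coercive inequality for $\chi_Ru$, together with $\|\chi_Ru\|_{L^{p+1}}^{p+1}\geq\int_{|x|\leq R/4}|u|^{p+1}\,dx$, yields the localized lower bound on the interior integral and hence the master inequality.
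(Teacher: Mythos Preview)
Your proposal is correct and follows essentially the same route as the paper. The paper packages the argument as a Morawetz estimate (Proposition~\ref{prop:mores}) with error $\tfrac{R}{T}+\tfrac{1}{R^2}+\tfrac{1}{R^{p-1}}$ and then extracts the sequences by taking $R=\max\{T^{1/3},T^{1/p}\}$; your version isolates the same truncated virial/localized coercivity mechanism (applying Proposition~\ref{P:coercive}(ii) to $\chi_R u$ via the identity \eqref{equ:iden}) and your exterior bounds, vanishing of the angular part for radial $u$, and pigeonhole over $[n,n+T_n]$ are the same ingredients with only cosmetic differences in the exponents of $\eta(R)$.
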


It is easy to see that the above lemma can be derived by the following proposition (choosing $T$ sufficiently large and $R=\max\{T^{1/3}, T^{1/p}\}$).

\begin{proposition}[Morawetz estimate]\label{prop:mores}
Let $T>0.$ For $R=R(\delta,M(u),Q_a)$ sufficiently large, we have
\begin{equation}\label{equ:potensmal}
\frac1{T}\int_0^T\int_{|x|\leq R}|u(t,x)|^{p+1}\;dx\;dt\lesssim\frac{R}{T}+\frac1{R^2}+\frac1{R^{p-1}}.
\end{equation}
\end{proposition}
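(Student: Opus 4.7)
The plan is to run a truncated virial/Morawetz argument, combining Lemma~\ref{L:virial} with the sub-threshold coercivity of Proposition~\ref{P:coercive}\,(ii). Set $V(t)=\int w_R|u(t,x)|^2\,dx$, so $V'(t)=2\,\mathrm{Im}\int\bar u\nabla u\cdot\nabla w_R\,dx$. The uniform bound \eqref{equ:uniformboun} together with $|\nabla w_R|\lesssim R$ gives $|V'(t)|\lesssim R$, so integrating $V''$ on $[0,T]$ provides a ``Morawetz budget'' of order $R$.

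From Lemma~\ref{L:virial}, for radial $u$ the outer piece $\tfrac{4R}{|x|}(|\nabla u|^2-|\partial_r u|^2)$ vanishes, while the annulus quadratic form $4\,\mathrm{Re}\,\partial_{jk}w_R\,\bar u_ju_k=4w_R''|\partial_r u|^2$ is non-negative; I would retain this in reserve. The remaining outer and annulus contributions---namely $\tfrac{4aR|u|^2}{|x|^3}$, $\tfrac{2(d-1)(p-1)}{p+1}\tfrac{R|u|^{p+1}}{|x|}$, and the $O(\tfrac{R}{|x|^3}|u|^2+\tfrac{R}{|x|}|u|^{p+1})$ annulus error---all fit under $C(R^{-2}+R^{-(p-1)})$, using the radial Sobolev bound $\|u\|_{L^\infty(|x|\geq r)}\lesssim r^{-(d-1)/2}$ from Lemma~\ref{lem:sobemb}, the uniform $L^2$ bound, and the elementary $(d-1)(p-1)/2\geq p-1$ for $d\geq 3$. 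The task is therefore reduced to establishing the localized coercivity
\begin{equation*}
8\!\!\int_{|x|\leq R/2}\!\!\Bigl[|\nabla u|^2+\tfrac{a|u|^2}{|x|^2}-\tfrac{d(p-1)}{2(p+1)}|u|^{p+1}\Bigr]dx+4\!\!\int_{R/2\leq|x|\leq R}\!\!w_R''|\partial_r u|^2\,dx\geq c\!\!\int_{|x|\leq R}\!\!|u|^{p+1}dx-C(R^{-2}+R^{-(p-1)}).
\end{equation*}

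To prove this, I would introduce a smooth radial cutoff $\chi_R$ equal to $1$ on $\{|x|\leq R/2\}$, supported in $\{|x|\leq R\}$, and chosen so that $\chi_R^2\leq w_R''/2$ pointwise on the annulus (feasible since $w_R''(R/2)=2$ and $w_R''\geq 0$). Integration by parts gives $\|\chi_R u\|_{\dot H_a^1}^2=\int\chi_R^2(|\nabla u|^2+a|u|^2/|x|^2)\,dx+O(R^{-2})$, and $\|\chi_R u\|_{L^2}\leq\|u\|_{L^2}$; thus for $R$ large, $\chi_R u$ inherits the strict sub-threshold hypothesis of Proposition~\ref{P:coercive}. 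Applying the sharp Gagliardo--Nirenberg inequality \eqref{E:GN} to $\chi_R u$ together with \eqref{equ:qah1} then produces $\|\chi_R u\|_{\dot H_a^1}^2\geq(1+c_0)\tfrac{d(p-1)}{2(p+1)}\|\chi_R u\|_{L^{p+1}}^{p+1}$ for some $c_0>0$. Rewriting in terms of $u$ and splitting integrals at $|x|=R/2$, the cutoff's ``extra'' kinetic integral $\int_{R/2\leq|x|\leq R}\chi_R^2|\partial_r u|^2$ is absorbed by the reserved annulus convexity $\int_{R/2\leq|x|\leq R}w_R''|\partial_r u|^2$ via $w_R''\geq 2\chi_R^2$ and $|\nabla u|=|\partial_r u|$; the gap $\int_{R/2\leq|x|\leq R}|u|^{p+1}$ between $\int_{|x|\leq R/2}|u|^{p+1}$ and $\int_{|x|\leq R}|u|^{p+1}$ is $O(R^{-(p-1)})$ by radial Sobolev; and the Hardy-type term $|a|\int_{R/2\leq|x|\leq R}\chi_R^2|u|^2/|x|^2\lesssim R^{-2}$. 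These give the displayed localized coercivity.

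Combining, $V''(t;w_R)\geq c\int_{|x|\leq R}|u|^{p+1}dx-C(R^{-2}+R^{-(p-1)})$. Integrating on $[0,T]$ and using $|V'(T)-V'(0)|\lesssim R$ yields $c\int_0^T\!\int_{|x|\leq R}|u|^{p+1}dx\,dt\leq CR+CT(R^{-2}+R^{-(p-1)})$, which after division by $cT$ is exactly \eqref{equ:potensmal}. The hard part will be the localized coercivity, in particular for $a<0$: there $|\nabla u|^2+a|u|^2/|x|^2$ is not pointwise non-negative, so the comparison $\|\chi_R u\|_{\dot H_a^1}\lesssim\|u\|_{\dot H_a^1}$ needed to transfer the sub-threshold condition to $\chi_R u$ must be argued through the sharp Hardy inequality; the compatible design of $\chi_R$ with $w_R''$ on the annulus, which is what lets the cutoff's kinetic leak be absorbed by the positive quadratic form in Lemma~\ref{L:virial}, is the other delicate point.
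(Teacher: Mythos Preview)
Your proposal is correct and follows essentially the same approach as the paper: truncated virial identity, localized coercivity for $\chi_R u$ via the sub-threshold hypothesis and sharp Gagliardo--Nirenberg, radial Sobolev for the outer and annulus errors, and $|V'|\lesssim R$ for the time-boundary term. The only cosmetic difference is that the paper absorbs the annulus kinetic leak by simply discarding the nonnegative term $8\int(1-\chi_R^2)|\nabla u|^2\,dx$ rather than engineering $\chi_R^2\leq w_R''/2$ to feed it into the convexity term; your more careful cutoff design is unnecessary but harmless.
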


\begin{proof}
First, by Lemma \ref{L:virial}, we have
\begin{align}
\nonumber \partial_{tt}& V(t;w_R)\\
\label{equ:mainter1}
  &= 8\int_{|x|\leq\frac{R}2}\Bigl[|\nabla u(t)|^2+a\tfrac{|u|^2}{|x|^2} - \tfrac{d(p-1)}{2(p+1)} |u(t)|^{p+1}\Bigr]\;dx  \\
\label{equ:erro11}
& \quad  + \int_{|x|>R} \Bigl[4aR\tfrac{|u|^2}{|x|^3}-\tfrac{2(d-1)(p-1)}{p+1}\tfrac{R}{|x|}|u|^{p+1}+\frac{4R}{|x|}(|\nabla u|^2-|\pa_ru|^2)\Bigr]\;dx \\
\label{equ:erro21} &\quad  +\int_{\frac{R}2\leq|x|\leq R}\Bigl[4{\rm
Re}\pa_{jk}w_R\bar{u}_j\pa_ku+O\bigl(  \tfrac{R}{|x|}|u|^{p+1} +
\tfrac{R}{|x|^3}|u|^2\bigr)\Bigr]\;dx.
\end{align}

We define $\chi$ to be a smooth cutoff to the set $\{|x|\leq1\}$ and set $\chi_R(x)=\chi(x/R)$. Note that
\begin{equation}\label{equ:iden}
\int\chi_R^2|\nabla u|^2\;dx=\int\Big[|\nabla(\chi_Ru)|^2+\chi_R\Delta(\chi_R)|u|^2\Big]\;dx,
\end{equation}
we get
\begin{align*}
\eqref{equ:mainter1}=&8\int\chi_R^2\Bigl[|\nabla u(t)|^2+a\tfrac{|u|^2}{|x|^2} - \tfrac{d(p-1)}{2(p+1)} |u(t)|^{p+1}\Bigr]\;dx  \\
&+8\int(1-\chi_R^2)\Bigl[|\nabla u(t)|^2+a\tfrac{|u|^2}{|x|^2} - \tfrac{d(p-1)}{2(p+1)} |u(t)|^{p+1}\Bigr]\;dx  \\
=&8\Big[\big\|\chi_Ru\big\|_{\dot{H}^1_a}^2-\tfrac{d(p-1)}{2(p+1)}\|\chi_Ru\|_{L_x^{p+1}}^{p+1}\Big]+8\int(1-\chi_R^2)|\nabla u(t)|^2\;dx\\
&+\int O\big(\tfrac{|u|^2}{R^2}\big)\;dx+C\int_{|x|\geq R}|u|^{p+1}\;dx.
\end{align*}

Next, we claim that there exists $c>0$ such that
\begin{equation}\label{equ:mainlarg}
\big\|\chi_Ru\big\|_{\dot{H}^1_a}^2-\tfrac{d(p-1)}{2(p+1)}\|\chi_Ru\|_{L_x^{p+1}}^{p+1}\geq
c\|\chi_Ru\|_{L_x^{p+1}}^{p+1}.
\end{equation}
Indeed, by \eqref{equ:iden}, we have
$$\big\|\chi_Ru\big\|_{\dot{H}^1_a}^2\leq \|u\|_{\dot{H}^1_x}^2+O\big(\tfrac{M(u)}{R^2}\big),$$
and $\|\chi_Ru\|_{L_x^2}\leq \|u\|_{L_x^2}$. Then, \eqref{equ:mainlarg} follows by the same argument as Proposition \ref{P:coercive} (ii).

Now, applying the fundamental theorem of calculus on an interval $[0,T]$, discarding the positive terms, using \eqref{equ:mainter1}-\eqref{equ:mainlarg}, we obtain
$$\int_0^T\int_{\R^d}|\chi_Ru|^{p+1}\;dx\lesssim\sup_{t\in[0,T]}|V(t,w_R)|+\int_0^T\int_{|x|\geq R}|u(t,x)|^{p+1}\;dx\;dt+\frac{T}{R^2}M(u).$$

From H\"older's inequality, \eqref{virial} and \eqref{equ:uniformboun}, we get
$$\sup_{t\in\R}|\pa_tV(t,w_R)|\lesssim R.$$
On the other hand, by radial Sobolev embedding, one has
\begin{equation}
\int_{|x|\geq R}|u(t,x)|^{p+1}\;dx\lesssim \frac1{R^{p-1}}\|u\|_{L_t^\infty \dot{H}^1}^{p-1}M(u).
\end{equation}
Combining  the above together, we obtain
$$\frac1{T}\int_0^T\int_{|x|\leq R}|u(t,x)|^{p+1}\;dx\;dt\lesssim\frac{R}{T}+\frac1{R^2}+\frac1{R^{p-1}},$$
which is accepted. Hence we conclude the proof of Proposition \ref{prop:mores}.

Therefore, we complete the proof of Theorem \ref{T:radial}.

\end{proof}

\begin{center}

\end{center}

\end{document}